\documentclass{article}

\usepackage{amsmath, amssymb, amsthm}
\usepackage{url}
\usepackage{prettyref}
\usepackage{fullpage,enumerate}
\usepackage{boxedminipage}

\usepackage[colorlinks=true,citecolor=blue,linkcolor=red,urlcolor=green]{hyperref}

\newcommand{\bs}{\backslash}
\newcommand{\comment}[1]{}
\newcommand{\ignore}[1]{}
\renewcommand{\P}{\ensuremath{\mathcal{P}}}

\def\Ev{\ensuremath{\mathfrak{E}}}
\def\F{\ensuremath{\mathcal{F}}}
\def\E{\ensuremath{\mathcal{E}}}

\def\reals{\ensuremath{\mathbb{R}}}
\def\r{r}
\def\R{R}
\def\MR{\mathrm{R}}
\def\MB{\mathrm{B}}
\def\PP{\ensuremath{\mathsf{P}}}
\def\NP{\ensuremath{\mathsf{NP}}}

\def\PTT{\ensuremath{\mathrm{PTT}}}

\def\Tr{{\textsc{PathsInTrees}}}
\def\Tri{\blacktriangle}

\def\Hgr{{\textsc{Hyp}}}
\def\deg{\ensuremath{\mathrm{deg}}}
\def\Chi{\ensuremath{\mathsf{p}}}
\def\CC{\ensuremath{\overline{\Chi}}}
\def\ChiDual{\ensuremath{\mathsf{cd}}}
\def\CCDual{\ensuremath{\overline{\mathsf{cd}}}}
\def\cd{\ChiDual}
\newtheorem{theorem}{Theorem}
\newrefformat{theorem}{Theorem~\ref{#1}}
\newrefformat{appendix}{Appendix~\ref{#1}}
\newrefformat{eq}{Equation~\eqref{#1}}
\newrefformat{chap}{Chapter~\ref{#1}}
\newrefformat{fig}{Figure~\ref{#1}}
\def\ythm[#1][#2][#3]{\newtheorem{#2}[theorem]{#3} \newrefformat{#2}{#3 \ref{#11}}}
\def\xthm[#1][#2][#3]{\newrefformat{#2}{#3 \ref{#11}}}
\ythm[#][definition][Definition]
\ythm[#][claim][Claim]
\ythm[#][conjecture][Conjecture]
\ythm[#][proposition][Proposition]
\ythm[#][lemma][Lemma]
\ythm[#][fact][Fact]
\ythm[#][corollary][Corollary]

\begin{document}

\title{Cover-Decomposition and Polychromatic Numbers\thanks{A preliminary version of this paper appeared as \cite{BPRS11}.}}
\date{\today}
\author{B{\'e}la Bollob{\'a}s\thanks{University of Memphis, USA and University of Cambridge, UK; supported in part by NSF grants CNS-0721983, CCF-0728928, DMS-0906634 and CCR-0225610, and ARO grant W911NF-06-1-0076.} \and David Pritchard\thanks{University of Waterloo, Canada. Work performed while at \'Ecole Polytechnique F\'ed\'erale de Lausanne, Switzerland and supported by an NSERC Post-Doctoral Fellowship.}\and Thomas Rothvo\ss\thanks{MIT, Cambridge, USA; supported by the Alexander von Humboldt Foundation within the Feodor Lynen program, by ONR grant N00014-11-1-0053, and by NSF contract CCF-0829878.}\and Alex Scott\thanks{Mathematical Institute, University of Oxford, UK.}}

\maketitle
\begin{abstract}
A colouring of a hypergraph's vertices is \emph{polychromatic} if every hyperedge contains at least one vertex of each colour; the \emph{polychromatic number} is the maximum number of colours in such a colouring. Its dual, the \emph{cover-decomposition number}, is the maximum number of disjoint hyperedge-covers. In geometric hypergraphs, there is extensive work on lower-bounding these numbers in terms of their trivial upper bounds (minimum hyperedge size and degree); our goal here is to broaden the study beyond geometric settings. We obtain algorithms yielding near-tight bounds for three families of hypergraphs: bounded hyperedge size, paths in trees, and bounded VC-dimension. This reveals that discrepancy theory and iterated linear program relaxation are useful for cover-decomposition. Finally, we discuss the generalization of cover-decomposition to sensor cover.
\end{abstract}

%


\section{Introduction}
In a set system on vertex set $V$, a subsystem is a \emph{set cover} if each vertex of $V$ appears in at least 1 set of the subsystem. Suppose that in the whole system, each vertex appears in at least $\delta$ sets, for some large $\delta$; does it follow that we can partition the system into 2 subsystems, such that each subsystem is a set cover?

Many natural families of set systems admit a universal constant $\delta$ for which this question has an affirmative answer. Such families are called \emph{cover-decomposable}. But the family of \emph{all} set systems is not cover-decomposable, as the following example shows. For any positive integer $k$, consider a set system which has $2k-1$ sets, and where every $k$ sets contain one mutually common vertex not contained by the other $k-1$ sets. This system satisfies the hypothesis of the question for $\delta = k$. But every set cover has $\ge k$ sets, and since there are only $2k-1$ sets in total, no partition into two set covers is possible. This example above shows that some sort of restriction on the family is necessary to ensure cover-decomposability.

One positive example of cover-decomposition arises if every set has size 2: such hypergraphs are simply graphs. They are cover-decomposable with $\delta = 3$: any graph with minimum degree 3 can have its edges partitioned into two edge covers. More generally, Gupta~\cite{Gupta78} showed (see also \cite{ABB+09,Andersen79}) that we can partition the edges of any multigraph into $\lfloor \frac{3 \delta + 1}{4} \rfloor$ edge covers. This bound is tight, even for 3-vertex multigraphs. (In \emph{simple} graphs the optimal bound~\cite{Gupta73} is $\delta-1$.)

Set systems in many geometric settings have been studied with respect to cover-decompos\-abil\-ity; many positive and negative examples are known and there is no easy way to distinguish one from the other. In the affirmative case, as with Gupta's theorem, the next natural problem is to find for each $t \ge 2$ the smallest $\delta(t)$ such that when each vertex appears in at least $\delta(t)$ sets, a partition into $t$ set covers is possible.
The goal of this paper is to extend the study of cover-decomposition beyond geometric settings. 

\subsection{Terminology and Notation}
A \emph{set system} or \emph{hypergraph} $H = (V, \E)$ consists of a ground set $V$ of vertices, together with a collection $\E$ of hyperedges, where each hyperedge $E \in \E$ is a subset of $V$.
We will sometimes call hyperedges just \emph{edges} or \emph{sets}. We permit $\E$ to contain multiple copies of the same hyperedge (e.g.~to allow us to define ``duals" and ``shrinking" later), and we also allow hyperedges of cardinality 0 or 1. We only consider hypergraphs that are finite. Note, in some geometric cases, infinite cover-decomposability problems can be reduced to finite ones. We refer the reader to \cite{Pa10} including the distinction between plane- and total-cover-decomposability. Additional work on the infinite version appears in~\cite{EMS09}.

To \emph{shrink} a hyperedge $E$ in a hypergraph means to replace it with some $E' \subseteq E$. This operation is useful in several places.

A \emph{polychromatic $k$-colouring} of a hypergraph is a function from $V$ to a set of $k$ colours so that for every edge, its image contains all colours. Equivalently, the colour classes partition $V$ into sets which each meet every edge, so-called \emph{vertex covers}/\emph{transversals}. The maximum number of colours in a polychromatic colouring of $H$ is called its \emph{polychromatic number}, which we denote by $\Chi(H)$.

A \emph{cover $k$-decomposition} of a hypergraph is a partition of $\E$ into $k$ subfamilies $\E = \biguplus_{i=1}^k \{\E_i\}$ such that each $\bigcup_{E \in \E_i} E = V$. In other words, each $\E_i$ must be a set cover. The maximum $k$ for which the hypergraph $H$ admits a cover $k$-decomposition is called its \emph{cover-decomposition number}, which we denote by $\ChiDual(H)$.

The \emph{dual} $H^*$ of a hypergraph $H$ is another hypergraph such that the vertex set of $H^*$ corresponds to the edge set of $H$, and vice-versa, with incidences preserved. Thus the vertex-edge incidence matrices for $H$ and $H^*$ are transposes of one
another. E.g., the standard notation for the example in the introduction is $\tbinom{[2k-1]}{k}^*$.
From the definitions it is easy to see that the polychromatic and cover-decomposition numbers are dual to one another,
$$\ChiDual(H) = \Chi(H^*).$$

The \emph{degree} of a vertex $v$ in a hypergraph is the number of hyperedges containing $v$; it is $d$-\emph{regular} if all vertices have degree $d$. We denote the minimum degree by $\delta$, and the maximum degree by $\Delta$. We denote the minimum size of any hyperedge by $r$, and the maximum size of any hyperedge by $R$. Note that $\Delta(H) = R(H^*)$ and $\delta(H) = r(H^*)$. It is trivial to see that $\Chi \le r$ in any hypergraph and dually that $\ChiDual \le \delta$. So the cover-decomposability question asks if there is a converse to this trivial bound: if $\delta$ is large enough, does $\ChiDual$ also grow? To write this concisely, for a family $\F$ of hypergraphs, let its extremal cover-decomposition function $\CCDual(\F, \delta)$ be
$$\CCDual(\F, \delta) := \min \{\ChiDual(H) \mid H \in \F; ~ \forall v \in V(H): \textrm{degree}(v) \ge \delta\},$$
i.e.~$\CCDual(\F, \delta)$ is the best possible lower bound for $\ChiDual$ among hypergraphs in $\F$ with min-degree $\ge \delta$.
So to say that $\F$ is cover-decomposable means that $\CCDual(\F, \delta)>1$ for some constant $\delta$.
We also dually define
$$\CC(\F, \r) := \min \{\Chi(H) \mid H \in \F; ~ \forall E \in \E(H): |E| \ge \r\}.$$
In the rest of the paper we focus on computing the functions $\CCDual$ and $\CC$.

We sometimes write $\Chi$ for $\Chi(H)$ when $H$ is clear from context, and likewise with $\CC$. We write $\CCDual(\delta)$ or $\CC(r)$ when the family $\F$ is clear from context. For example, Gupta's theorem (together with the tight example) says that in graphs, $\CCDual(\delta) = \lfloor \frac{3 \delta + 1}{4} \rfloor$.

\subsection{Results}
In \prettyref{sec:bdsize} we generalize Gupta's theorem to hypergraphs of bounded edge size. Let $\Hgr(R)$ denote the family of hypergraphs with all edges of size at most $\R$.

\begin{theorem}\label{theorem:1}
For all $R, \delta$ we have $\CCDual(\Hgr(R), \delta) \ge \delta/(\ln R+O(\ln \ln R))$.
\end{theorem}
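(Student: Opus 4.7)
Since $\Chi'(H)=\Chi(H^*)$, it suffices to produce a polychromatic $k$-colouring of a hypergraph $H^*$ with $\Delta(H^*)\le R$ and $r(H^*)\ge\delta$, where $k=\delta/(\ln R+O(\ln\ln R))$. The plan is to attack this dual statement via the Lov\'asz Local Lemma (LLL), after a preliminary \emph{shrinking}: replace every hyperedge of $H^*$ by a fixed $\delta$-element subset of it. A polychromatic colouring of the shrunken hypergraph is automatically polychromatic on the original, since enlarging an edge only introduces more colours; so without loss of generality every edge has size exactly $\delta$, while the maximum degree is still at most $R$.

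I would then colour the vertices of the shrunken $H^*$ independently and uniformly with $k$ colours. For each edge $E$ and each colour $c$, let $A_{E,c}$ be the event ``colour $c$ is missing from $E$''; then
\[
\Pr[A_{E,c}]=(1-1/k)^{\delta}\le e^{-\delta/k}.
\]
Since $A_{E,c}$ depends only on the $\delta$ colours on vertices of $E$, it is mutually independent of every collection of events $A_{E',c'}$ with $E\cap E'=\emptyset$. Each vertex of $E$ lies in at most $R$ hyperedges, so $E$ meets at most $\delta R$ other edges, and $A_{E,c}$ has at most $\delta Rk-1$ dependent events. The symmetric LLL then produces a polychromatic colouring whenever $e\cdot e^{-\delta/k}\cdot\delta Rk\le 1$.

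Writing $k=\delta/f$, this condition reduces to $f+\ln f\ge\ln R+2\ln\delta+1$, so a direct symmetric LLL only yields $k\ge\delta/(\ln R+2\ln\delta)$, whereas the theorem demands $\delta/(\ln R+O(\ln\ln R))$. The main obstacle is thus trimming the spurious $\ln\delta$ term. I would resolve it by an iterative peeling argument: prove the easier claim that whenever $\delta\ge\ln R+O(\ln\ln R)$, one can find a single edge cover $C\subseteq\E$ with $\deg_C(v)\le\ln R+O(\ln\ln R)$ for every $v$, and then recurse on $H\setminus C$, whose minimum degree has dropped by at most $\ln R+O(\ln\ln R)$; iterating $\delta/(\ln R+O(\ln\ln R))$ times exhibits that many disjoint covers. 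The heart of the difficulty is producing that sparse-yet-covering $C$. I would attempt this via randomised rounding of the natural cover-packing LP (with $x_E\approx(\ln R)/\delta$) combined with a Beck--Radhakrishnan--Srinivasan-style two-stage random-plus-alteration argument, so that vertices of very high degree---whose naive LLL dependencies would otherwise be unbounded---are neutralised in a second local-repair phase rather than being handled in the first pass.
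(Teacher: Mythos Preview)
Your first two steps correctly reproduce the paper's basic LLL bound $\CC'\ge\lfloor\delta/\ln(\mathrm{e}R\delta^2)\rfloor$ and correctly isolate the spurious $\ln\delta$ term as the remaining obstacle. The gap is in your proposed fix. The ``easier claim''---that one can always extract a single cover $C$ with $\deg_C(v)\le\ln R+O(\ln\ln R)$ everywhere---is not easier than the theorem; any direct attack via LLL or LP rounding hits the same barrier. Sampling each edge with probability $p\approx(\ln R)/\delta$ gives bad-event probability $R^{-\Theta(1)}$ per vertex, but the dependence degree is $\Theta(\delta R)$ (the $\delta$ edges through $v$ each touch up to $R$ other vertices), so the LLL forces $p$ up to $\Theta((\ln R+\ln\delta)/\delta)$ and hence max degree $\Theta(\ln R+\ln\delta)$ in $C$---exactly the term you wanted to kill. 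A local-repair second phase does not obviously help: fixing an uncovered vertex means adding one of its incident edges, which can raise the degree at up to $R-1$ other vertices already at their cap; and the Radhakrishnan--Srinivasan machinery you invoke controls total cover \emph{size}, not per-vertex degree. Peeling covers of max degree $\Theta(\ln R+\ln\delta)$ iteratively therefore just reproduces the $\delta/(\ln R+\ln\delta)$ bound you already had.

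The paper removes the $\ln\delta$ term by a different mechanism: rather than peeling thin covers, it \emph{halves} the edge set via discrepancy. A Chernoff-plus-LLL argument gives a split $\E=\E_1\uplus\E_2$ with $\delta(V,\E_i)\ge\delta/2-O(\sqrt{\delta\ln(R\delta)})$; iterating $T$ times (with shrinking to restore regularity) and choosing $T$ so that $\delta/2^T$ is polylogarithmic in $R$, the total loss $\delta-2^Td_T$ is a geometric sum of order $\delta\cdot O(1/\ln R)$. Each of the $2^T$ pieces now has degree $d_T$ polylogarithmic in $R$, so the basic LLL bound inside each piece yields $d_T/(\ln R+O(\ln\ln R))$ covers, and since $2^Td_T\ge\delta(1-O(1/\ln R))$ the theorem follows. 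The essential point you are missing is that halving costs $O(\sqrt{d\ln(Rd)})$ per level---summable geometrically---whereas peeling one cover at a time costs at least $\Omega(\ln\delta)$ per cover, which is not.
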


\noindent Before proving \prettyref{theorem:1}, we give a simpler proof of a result that is weaker by a constant factor. This proof uses the Lov\'asz Local Lemma (LLL) and the Beck-Fiala theorem~\cite{BF81}. The Beck-Fiala theorem says that every hypergraph's \emph{discrepancy} (defined later) is less than $2\Delta$. To get the strong version of \prettyref{theorem:1} we employ another discrepancy upper bound of $2\sqrt{R \ln (R\Delta)}$, obtained using the LLL and Chernoff bounds (\prettyref{proposition:chernoff}). Next we show that \prettyref{theorem:1} is always tight up to a constant factor, and that when $R$ is sub-exponential in $\delta$ this constant factor tends to 1.
\begin{theorem}\label{theorem:bounds}
\begin{enumerate}[(i)]\item For all $R \ge 2, \delta \ge 1$ we have $\CCDual(\Hgr(R), \delta) \le \max\{1, O(\delta/\ln R)\}.$ \item For any sequence $R, \delta \to \infty$ with $\delta = \omega(\ln R)$ we have $\CCDual(\Hgr(R), \delta) \le (1+o(1))\delta/\ln(R).$\end{enumerate}
\end{theorem}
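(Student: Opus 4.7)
The approach is a probabilistic construction carried out on the dual. By the duality $\Chi'(H)=\Chi(H^*)$, and because taking duals exchanges minimum degree with minimum edge size and maximum edge size with maximum degree, both parts reduce to producing a hypergraph $H'$ on a vertex set $[n']$ with every edge of size exactly $\delta$, every vertex of degree at most $R$, and $\Chi(H')\le t$ for a suitable $t$. The construction will be random: sample $m'$ edges of $H'$ independently, each a uniform random $\delta$-subset of $[n']$, with $n'$ taken large and $m'$ to be chosen.

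Two events must hold simultaneously with positive probability. \emph{(i) Bounded max-degree.} Each $\deg(v)$ is binomial with mean $m'\delta/n'$; Chernoff plus a union bound over $[n']$ gives maximum degree $(1+o(1))m'\delta/n'$ with high probability once $n'$ is large. \emph{(ii) No polychromatic $(t{+}1)$-coloring.} For a fixed partition $[n']=V_1\sqcup\cdots\sqcup V_{t+1}$, the largest part satisfies $|V_{i^*}|\ge n'/(t{+}1)$, and a uniform $\delta$-subset $E$ misses it with probability at least
\[
\binom{n'-|V_{i^*}|}{\delta}\Big/\binom{n'}{\delta}\;\ge\;\left(1-\tfrac{1}{t+1}\right)^{\!\delta}(1-o(1))\;=:\;q.
\]
Hence $\Pr[E\text{ polychromatic}]\le 1-q$, and taking a product over the $m'$ independent edges together with a union bound over the at most $(t+1)^{n'}$ colorings gives $\Pr[\Chi(H')\ge t{+}1]\le\exp(n'\ln(t{+}1)-m'q)$. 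Choosing $m'=(1+\eta)n'\ln(t{+}1)/q$ for any small fixed $\eta>0$ makes this decay in $n'$. Combining (i) and (ii), such an $H'$ exists whenever
\[
R\;\ge\;(1+o(1))\,\delta\,\ln(t{+}1)\,(1+1/t)^{\delta}.\qquad(\star)
\]

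For Part (a), set $t=C\delta/\ln R$ with $C$ a large absolute constant: then $(1+1/t)^\delta\le R^{1/C+o(1)}$ and $(\star)$ reduces to $R^{1-1/C-o(1)}\ge\delta\ln t$, which holds for $\delta\le R^{1-1/C-o(1)}$; for larger $\delta$ I use a padding trick, replacing each edge of a base construction by $q$ parallel copies (so both $\delta$ and $\Chi'$ scale by $q$ while $R$ is unchanged), preserving the ratio $\Chi'/\delta\le C/\ln R$. For Part (b), apply the same scheme to a base on parameters $(R,\delta_0)$ with $\ln R\ll\delta_0=R^{o(1)}$ and $t_0=(1+\gamma)\delta_0/\ln R$ for $\gamma=\omega((\ln\delta_0)/\ln R)=o(1)$: $(\star)$ reduces to $R^{\gamma/(1+\gamma)}\ge(1+o(1))\delta_0\ln t_0$, which is satisfied, and the base construction achieves the ratio $t_0/\delta_0=(1+o(1))/\ln R$. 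Padding with $q=\lceil\delta/\delta_0\rceil$, provided $\delta_0=o(\delta)$, extends the bound to arbitrary $\delta=\omega(\ln R)$, giving $\Chi'\le(1+o(1))\delta/\ln R$. The main technical obstacle is calibrating $\delta_0$ and $\gamma$ so that the $o(1)$ in Part (b) is honest uniformly across the whole regime $\delta=\omega(\ln R)$; padding is essential because the random construction alone only achieves the $(1+o(1))$ bound when $\ln\delta=o(\ln R)$, and reducing to a base case with $\delta_0=R^{o(1)}$ is what allows the bound to extend to arbitrary admissible $\delta$.
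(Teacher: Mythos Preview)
Your probabilistic construction in the dual is a reasonable alternative to the paper's approach, and the derivation of $(\star)$ for the base case is essentially sound. But the padding step contains a genuine gap.

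You assert that under $q$-fold edge replication ``both $\delta$ and $\Chi'$ scale by $q$.'' The first is true; the second is false in the direction you need. Replication gives $\Chi'(qH)\ge q\,\Chi'(H)$ trivially (duplicate an optimal decomposition), but you need the \emph{upper} bound $\Chi'(qH)\le q\,t_0$, and this does not follow from $\Chi'(H)\le t_0$. Concretely: a triangle has $\delta=2$ and $\Chi'=1$, yet doubling each edge gives $\delta=4$ and $\Chi'=3>2\cdot 1$ (any two non-parallel edges cover all three vertices, and the six edges split into three such pairs). So the ratio $\Chi'/\delta$ can strictly increase under replication, and your base bound does not transfer to larger~$\delta$.

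The paper sidesteps this by never bounding $\Chi'$ directly. For part~(a) it uses the explicit family $\binom{[2k-1]}{k}^*$ and lower-bounds the \emph{minimum set cover size}; for part~(b) it builds a random \emph{primal} hypergraph and again proves (\prettyref{claim:journey}) that every set cover is large. The point is that if every set cover of $H$ has at least $s$ edges, then $\Chi'(qH)\le q\,|\E(H)|/s$ for all $q$, because covers of $qH$ still need $\ge s$ edges; this is what makes replication legitimate, and the paper says so explicitly when invoking the trick for~(b). Your union bound over $(t{+}1)$-colourings establishes only $\Chi(H')\le t$, i.e.\ $\Chi'(H)\le t$; it gives no lower bound on the transversal number of $H'$ (equivalently the minimum set cover of $H$). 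To repair the argument you would need to add a first-moment bound showing every transversal of your random $H'$ is large, and then pad via the set-cover-size route as the paper does.

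A smaller technical point: your claim that the maximum degree is $(1+o(1))m'\delta/n'$ ``once $n'$ is large'' is not right as stated, since the mean degree $m'\delta/n'$ is independent of $n'$ while the maximum of $n'$ i.i.d.\ binomials with fixed mean drifts upward with $n'$. You need to calibrate $n'$ against $R$ (say $\ln n' = o(R)$) rather than send $n'\to\infty$ with everything else fixed.
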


\noindent Here (i) uses an explicit construction while (ii) uses the probabilistic method. 
By plugging \prettyref{theorem:1} into an approach of \cite{ABB+09}, one obtains a good bound on the cover-decomposition number of \emph{sparse} hypergraphs.
\begin{corollary}\label{corollary:sparse}
Suppose for some fixed $\alpha, \beta$ that $H = (V, \E)$ satisfies, for all $V' \subseteq V$ and $\E' \subseteq \E$, that the number of incidences between $V'$ and $\E'$ is at most $\alpha|V'|+\beta|\E'|$. Then $\ChiDual(H) \ge \frac{\delta(H)-\alpha}{\ln \beta+O (\ln \ln \beta)}$.
\end{corollary}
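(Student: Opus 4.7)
The plan is to reduce the sparse case to the bounded-edge-size case and then invoke \prettyref{theorem:1}. Concretely, I would first construct a sub-hypergraph $H''$ on the same vertex set $V$ such that (i) each hyperedge of $H''$ is contained in the corresponding hyperedge of $H$, (ii) every hyperedge of $H''$ has size at most $\beta$, and (iii) every vertex of $H''$ has degree at least $\delta(H)-\alpha$. Since any cover-decomposition of $H''$ yields one of $H$ by re-enlarging edges to their $H$-counterparts, we would have $\Chi'(H) \ge \Chi'(H'')$, and applying \prettyref{theorem:1} to $H'' \in \Hgr(\beta)$ with minimum degree $\delta(H)-\alpha$ would immediately give the stated bound.

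To produce $H''$ I plan to set up an $s$--$t$ max-flow on the bipartite incidence graph of $H$: attach a source $s$ to each $v \in V$ via an arc of capacity $\delta(H)-\alpha$, a sink $t$ from each $E \in \E$ via an arc of capacity $\beta$, and put unit capacities on the incidence edges. If the maximum flow saturates every source arc, then the support of an integer optimal flow on the incidence edges defines $H''$: the saturation forces degrees $\ge \delta(H)-\alpha$ and the sink capacity forces edge sizes $\le \beta$. Integrality is automatic since all capacities are integers (WLOG we take $\alpha,\beta$ integer, absorbing the rounding into the $O(\cdot)$).

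The only substantive step is checking the min-cut condition. For any source-side choice $V_0 \subseteq V$ and $\E_0 \subseteq \E$, the cut has capacity
\[
(\delta(H)-\alpha)\,|V \setminus V_0| \;+\; \mathrm{inc}(V_0, \E \setminus \E_0) \;+\; \beta\,|\E_0|,
\]
so the claim reduces to $\mathrm{inc}(V_0, \E \setminus \E_0) + \beta|\E_0| \ge (\delta(H)-\alpha)|V_0|$. This is the heart of the proof, and my plan is to combine two ingredients: (a) the sparsity hypothesis, which gives $\mathrm{inc}(V_0, \E_0) \le \alpha|V_0| + \beta|\E_0|$, and (b) the minimum-degree hypothesis, which gives $\mathrm{inc}(V_0, \E) \ge \delta(H)|V_0|$. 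Subtracting (a) from (b) yields $\mathrm{inc}(V_0, \E \setminus \E_0) \ge (\delta(H)-\alpha)|V_0| - \beta|\E_0|$, and rearranging gives exactly the required inequality (with no slack).

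The min-cut verification is the only step with real content; the rest is bookkeeping. Two small remarks: the bound is vacuous when $\delta(H) < \alpha$ (there is nothing to prove), and the $\max\{1,\cdot\}$ in \prettyref{theorem:1} handles the regime in which the ratio $(\delta(H)-\alpha)/(\ln\beta+O(\ln\ln\beta))$ drops below one, matching the implicit convention that $\Chi'(H)\ge 1$ whenever $H$ admits any single cover.
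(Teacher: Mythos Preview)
Your proposal is correct and follows essentially the same approach as the paper: shrink hyperedges via a max-flow/min-cut argument on the bipartite incidence network (source arcs of capacity $\delta(H)-\alpha$, sink arcs of capacity $\beta$, unit incidence arcs), verify the min-cut condition by combining the sparsity bound $\mathrm{inc}(V_0,\E_0)\le\alpha|V_0|+\beta|\E_0|$ with the degree bound $\mathrm{inc}(V_0,\E)\ge\delta(H)|V_0|$, and then apply \prettyref{theorem:1}. The paper's proof is exactly this, just stated more tersely.
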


\noindent Note that duality yields a similar bound on the polychromatic number.

In \prettyref{sec:structured} we consider the following family of hypergraphs: the ground set is the edge set of an undirected tree, and each hyperedge must correspond to the edges lying in some path in the tree. We show that such systems are cover-decomposable:

\newcounter{thm4}
\setcounter{thm4}{\value{theorem}}

\begin{theorem}\label{theorem:2}
For hypergraphs defined by edges of paths in trees, $\CCDual(\delta) = \Omega(\delta).$
\end{theorem}

\noindent
To prove \prettyref{theorem:2} we exploit iterated LP relaxation, using
an extreme point structure theorem for paths in trees from \cite{KPP08}. The approach also generalizes to path-in-tree systems where the ground set consists of the arcs or the nodes of the tree.
We also determine the extremal polychromatic number for such systems:

\newcounter{thm5}
\setcounter{thm5}{\value{theorem}}

\begin{theorem}\label{theorem:3}
For hypergraphs defined by edges of paths in trees, $\CC(r) = \lceil r/2 \rceil.$
\end{theorem}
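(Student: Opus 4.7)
The plan is to establish matching bounds $\lceil r/2 \rceil \le \CC(r) \le \lceil r/2 \rceil$; write $k := \lceil r/2 \rceil$ throughout.

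For the lower bound I would root the host tree $T$ arbitrarily and colour each edge $e$ of $T$ by the depth of its deeper endpoint taken modulo $k$. Given any hyperedge $P$ (a path on $\ge r$ tree-edges), let $v$ be its topmost vertex; then $P$ splits into two descending arms of lengths $a, b$ with $a+b \ge r \ge 2k-1$, whence $\max(a,b) \ge \lceil (a+b)/2 \rceil \ge k$. Along a descending arm, the lower-endpoint depths of successive edges increase by $1$, so $k$ consecutive such edges realise $k$ consecutive residues modulo $k$; the longer arm alone therefore uses every colour, and $P$ is polychromatic.

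For the upper bound I would exhibit a spider: one central vertex joined to $k+2$ disjoint legs, each a path of $k$ tree-edges. The hyperedges are the $\binom{k+2}{2}$ paths through the centre between distinct leg-endpoints, each of which consists of $2k \ge r$ edges. Suppose a polychromatic $(k+1)$-colouring existed. Each leg has only $k$ edges, so uses at most $k$ of the $k+1$ colours and hence \emph{misses} at least one. Since any two legs together form a hyperedge, their missing-colour sets must be pairwise disjoint; this produces $k+2$ pairwise disjoint non-empty subsets of a $(k+1)$-element universe, a contradiction.

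I do not expect a substantive obstacle: once one hits on the depth-mod-$k$ colouring and the $(k+2)$-leg spider, both verifications are immediate counting. The only quantitative things to check are that the ceiling $\lceil r/2 \rceil$ falls out exactly on both sides, namely that $a+b \ge r$ forces $\max(a,b) \ge \lceil r/2 \rceil$, and that $k+2$ legs is the first value at which the missing-colour pigeonhole fails.
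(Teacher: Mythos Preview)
Your proof is correct. The lower bound via the depth-mod-$k$ colouring is exactly the paper's argument.

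For the upper bound you take a genuinely different and cleaner route. The paper builds a complete $t$-ary tree of height $k-1$ with $t = 2k^{k-1}$, takes all leaf--leaf paths through the root (each of length $2k-2$), and then uses an iterated pigeonhole down the levels to locate two root--leaf paths carrying the same ordered colour sequence; their concatenation is a hyperedge using at most $k-1$ colours. Your spider with $k+2$ legs of length $k$ replaces this with a single-shot pigeonhole on ``missing colours'': each leg misses at least one of the $k+1$ colours, and any two legs together form a hyperedge, forcing the missing-colour sets to be pairwise disjoint nonempty subsets of a $(k+1)$-set---impossible with $k+2$ legs. This buys you a dramatically smaller witness (polynomial rather than doubly-exponential size in $k$) and a shorter argument. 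The paper's construction, on the other hand, achieves the slightly sharper $r(H)=2k-2$ rather than your $2k$, but since $\CC$ is monotone in $r$ this makes no difference for the theorem as stated.
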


\noindent This contrasts with a construction of Pach, Tardos and T\'oth~\cite{PTT09} (described in \prettyref{sec:vc}): if we also allow hyperedges consisting of sets of ``siblings," then $\CC(r) = 1$ for all $r$.

\renewcommand{\thefootnote}{\fnsymbol{footnote}}

The \emph{VC-dimension} is a prominent measure of set system complexity used frequently in geometry: it is the maximum cardinality of any $S \subseteq V$ such that $\{S \cap E \mid E \in \E\} = \mathbf{2}^S$. It is natural to ask what role, if any, the VC-dimension plays in cover-decomposability. In the most restricted case, we get:

\newcounter{thm6}
\setcounter{thm6}{\value{theorem}}

\begin{theorem}\label{theorem:4}
For the family of hypergraphs with VC-dimension 1, $\CC(r) = \lceil r/2 \rceil$ and $\CCDual(\delta) = \lceil \delta/2 \rceil$.
\end{theorem}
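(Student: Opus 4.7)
The theorem asserts matching upper and lower bounds for the two extremal functions $\CC$ and $\CC'$. A single self-dual example supplies both upper bounds, while the lower bounds I would prove by parallel inductive arguments that exploit the pair-trace structure forced by VC-dim~$1$.

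\emph{Upper bounds via a self-dual example.} Take $H_r := \binom{[r+1]}{r}$, the hypergraph on $[r+1]$ whose hyperedges are all $r$-element subsets. It is $r$-uniform, $r$-regular, and self-dual under the bijection $v \leftrightarrow [r+1] \setminus \{v\}$. Since every hyperedge omits exactly one vertex, no pair can have trace $\emptyset$, so $H_r$ has VC-dim~$1$. A pigeonhole argument gives $\Chi(H_r) \le \lceil r/2 \rceil$: any colour class in a polychromatic colouring must meet every hyperedge, hence have size $\ge 2$, giving at most $\lfloor(r+1)/2\rfloor = \lceil r/2 \rceil$ classes; equality is achieved by a balanced partition of $[r+1]$ into doubletons (plus one triple if $r+1$ is odd). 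Self-duality yields $\Chi'(H_r) = \lceil r/2 \rceil$ as well.

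\emph{Lower bound on $\Chi$.} Given a VC-$1$ hypergraph $H$ with $r(H) \ge r$, I would argue $\Chi(H) \ge \lceil r/2\rceil$ by induction on $r$. The key observation is that, for every pair $\{u,v\} \subseteq V(H)$, one of the four traces $\emptyset, \{u\}, \{v\}, \{u,v\}$ is missing. The ``clean'' case is when some pair $\{u,v\}$ misses the $\emptyset$-trace, i.e.\ forms a $2$-element transversal; then I colour $u, v$ with a new colour, delete them, and recurse on $H - \{u,v\}$, which is again VC-$1$ with $r$ dropped by at most $2$, giving $\Chi(H) \ge 1 + \lceil(r-2)/2\rceil = \lceil r/2\rceil$. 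When no transversal pair exists, the $\{u\}$- and $\{v\}$-missing cases give ``dominated'' vertices one can delete, while $\{u,v\}$-missing cases give disjointness/decomposition; a careful case analysis (aided by Sauer's bound $|\E(H)| \le |V(H)| + 1$ for VC-$1$ systems) should show that after preprocessing either the induction still applies or $H$ falls into a base case. The residual ``no-transversal-pair, twin-free'' analysis is the main obstacle.

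\emph{Lower bound on $\Chi'$.} VC-dimension is not preserved under duality, so the previous step does not immediately dualise. I would mirror the argument on hyperedges rather than vertices: seek two hyperedges $E_1, E_2$ with $E_1 \cup E_2 = V(H)$, use $\{E_1, E_2\}$ as one member of the cover decomposition, and recurse on $\E(H) \setminus \{E_1, E_2\}$, which inherits VC-$1$ and has minimum degree dropped by at most $2$, obtaining $\Chi'(H) \ge 1 + \lceil(\delta - 2)/2\rceil = \lceil \delta/2\rceil$. As before, the difficulty concentrates in the degenerate case where no covering pair of hyperedges exists; here the VC-$1$ structure of $H$, combined with Sauer's bound, should force enough regularity to dispose of the residual case separately.
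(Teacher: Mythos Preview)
Your overall strategy matches the paper closely: the same self-dual example $\binom{[r+1]}{r}$ for both upper bounds, and the same inductive scheme of peeling off a pair (a transversal pair of vertices for $\Chi$, a covering pair of hyperedges for $\Chi'$) for the lower bounds. Where you diverge is in the residual cases you flag as obstacles; the paper disposes of them cleanly by first passing to the dual. The key observation is that a hypergraph has VC-dimension~$1$ exactly when its dual is \emph{cross-free} (for all $S,T\in\E$, one of $S\cap T$, $S\setminus T$, $T\setminus S$, $V\setminus(S\cup T)$ is empty), and in the cross-free world both degenerate cases acquire transparent structural names.

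For $\Chi$: your ``no transversal pair in $H$'' is dually ``no two hyperedges of the cross-free $H^*$ cover its ground set''. But cross-freeness says every pair $S,T$ satisfies $S\cap T=\varnothing$, $S\subseteq T$, $T\subseteq S$, or $S\cup T=V$; ruling out the last option leaves precisely the definition of \emph{laminar}. A laminar hypergraph has $\Chi'=\delta$ (peel off the inclusion-maximal sets as one cover and recurse), so the degenerate case actually gives $\Chi(H)=r(H)$, stronger than needed. No Sauer-based case analysis is required.

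For $\Chi'$: here the paper does \emph{not} run your induction. Working in the cross-free dual $H^*$, it first discards all non-inclusion-minimal hyperedges (harmless for $\Chi$), leaving a cross-free \emph{clutter}, and then proves a dichotomy: either all hyperedges are pairwise disjoint, or every two hyperedges together cover the ground set. Each branch admits an explicit polychromatic $\lceil r/2\rceil$-colouring in one shot. Your proposed induction (find a covering pair $E_1\cup E_2=V$ in the VC-$1$ hypergraph $H$ itself) does not have a laminar-type fallback, because ``no covering pair'' in a VC-$1$ hypergraph does not dualise to laminarity of anything; the paper's clutter route sidesteps this asymmetry. So your plan for $\Chi'$ has a genuine gap that the Sauer bound alone will not close.
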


\noindent By duality, the same holds for the family of hypergraphs whose duals have VC-dimension 1. Roughly, these results hold because VC-dimension 1 implies a strong structural property. Moreover, we observe that a dimension upper bound of 2 does not yield cover-decomposability:

\newcounter{thm7}
\setcounter{thm7}{\value{theorem}}

\begin{theorem}\label{theorem:5}
For the family of hypergraphs $\{H \mid \textrm{VC-dim}(H), \textrm{VC-dim}(H^*) \le 2\}$, we have $\CC(r) = 1$ for all $r$ and $\CCDual(\delta) = 1$ for all $\delta$.
\end{theorem}

\noindent To prove this, we show that the construction of \cite{PTT09} has primal and dual VC-dimension at most 2.

In the last section, we discuss the generalization of cover-decomposition to cover-scheduling. This is inspired by results from \cite{GK09}, where a proof of $\cd = \Omega(\delta)$ for convex polygons in the plane is generalized to a scheduling setting while losing only a constant factor. We do not know if \prettyref{theorem:1} has an analogue within a constant factor; we prove a weaker version with linear dependence on $R$.

All of our lower bounds on $\CC$ and $\CCDual$ can be implemented as polynomial-time algorithms. In the case of \prettyref{theorem:1} this relies on the constructive LLL framework of Moser-Tardos~\cite{MT10}. In the tree setting (\prettyref{theorem:2}) this relies on a linear programming subroutine. Note: since we also have the trivial bounds $\Chi \le r$ and $\ChiDual \le \delta,$ these give approximation algorithms for $\Chi$ and $\ChiDual$, e.g.~\prettyref{theorem:1} gives a $(\ln R + O(\ln \ln R))$-approximation for $\ChiDual$. 

\subsection{Related Work}\label{sec:closing}
One practical motive to study cover-decomposition is that the hypergraph can model a collection of sensors~\cite{BEJVY07,GK09}, with each $E \in \E$ corresponding to a sensor which can monitor the subset $E \subset V$ of vertices; then monitoring all of $V$ takes a set cover, and $\CCDual$ is the maximum ``coverage" of $V$ possible if each sensor can only be turned on for a single time unit or monitor a single frequency. A motivation from theory is that if $\CCDual(\delta) = \Omega(\delta)$ holds for a family closed under vertex deletion, then the size of a \emph{dual $\epsilon$-net} is bounded by $O(1/\epsilon)$~\cite{PT11}.

A hypergraph is said to be \emph{weakly $k$-colourable} if we can $k$-colour its vertex set so that no edge is monochromatic. Weak 2-colourability is also known as \emph{Property B}, and these notions coincide with the property $\Chi \ge 2$. However, weak $k$-colourability does not imply $\Chi \ge k$ in general. The \emph{discrepancy} of a hypergraph is the minimum $D$ so that we can colour its vertices by $\pm 1$ so that the sum of the values within every edge is between $-D$ and $D.$ This is related to Property B: if a hypergraph's discrepancy is strictly less than $\delta,$ then the same colouring establishes that it has Property B. We will deal with discrepancy in the dual setting.

For a graph $G = (V, E)$, the \emph{(closed) neighbourhood hypergraph} $\mathcal{N}(H)$ is defined to be a hypergraph on ground set $V$, with one hyperedge $\{v\} \cup \{u \mid \{u, v\} \in E\}$ for each $v \in V$. Then $\Chi(\mathcal{N}(G))=\ChiDual(\mathcal{N}(G))$ equals the \emph{domatic number} of $G$, i.e.~the maximum number of disjoint dominating sets. The paper of Feige, Halld\'orsson, Kortsarz \& Srinivasan~\cite{FHKS02} obtains upper bounds for the domatic number and their bounds are essentially the same as what we get by applying \prettyref{theorem:1} to the special case of neighbourhood hypergraphs; compared to our methods they use the LLL but not discrepancy or iterated LP relaxation. They give a hardness-of-approximation result which implies that \prettyref{theorem:1} is tight with respect to the approximation factor, namely for all $\epsilon>0$, it is hard to approximate $\ChiDual$ within a factor better than $(1-\epsilon)\ln R$, under reasonable complexity assumptions. A simpler, weaker hardness result is that you cannot approximate $\ChiDual$ of graphs better than 3/2, since it is \NP-hard to check if a cubic graph has 3 disjoint perfect matchings~\cite{Holyer81}.
A generalization of results in \cite{FHKS02} to packing polymatroid bases was given in \cite{CCV09}; this implies a weak version of \prettyref{theorem:1} where the $\ln R$ term is replaced by $\ln |V|$.

Given a plane graph, define a hypergraph whose vertices are the graph's vertices, and whose hyperedges are its faces. In \cite{ABB+09} it was shown to satisfy the hypothesis of \prettyref{corollary:sparse} for $\alpha=\beta=2$. Their method, which we re-use in the proof of \prettyref{corollary:sparse}, gives $\CC(\delta) \le \lfloor (3\delta-5)/4\rfloor$ from Gupta's theorem.

A notable progenitor in geometric literature on cover-decomposition is the following question of Pach~\cite{Pach80}. Take a convex set $A \subset \reals^2$.
Let $\reals^2|\textsc{Translates}(A)$ denote the family of hypergraphs where the ground set $V$ is a finite subset of $\reals^2$, and each hyperedge is the intersection of $V$ with some translate of $A$. Pach asked if such systems are cover-decomposable, and this question is still open.
A state-of-the-art partial answer is due to Gibson \& Varadarajan~\cite{GK09}, who prove that $\CC(\reals^2|\textsc{Translates}(A), \delta) = \Omega(\delta)$ when $A$ is an open convex polygon; prior work includes~\cite{P86,TT07,PT07,A08,PT10}.
There is an unpublished proof~\cite{MLP87} that $\CC(\reals^2|\textsc{Unit-Discs}, 33) \ge 2$. On the other hand, unit balls in $\reals^3$ or higher dimensions are not cover-decomposable~\cite{MLP88,PTT09}.

Pach, Tardos and T\'oth~\cite{PTT09} obtain several negative results by embedding a non-cover-decomposable tree-based construction (used in \prettyref{sec:vc}) in geometric settings. In this way they prove that the following families are not cover-decomposable:
$\reals^2|\textsc{Axis-Aligned-Rectangles}$; $\reals^2|\textsc{Translates}(A)$ when $A$ is a quadrilateral that is not convex; and $\reals^2|\textsc{Strips}$ and its dual.
In contrast to the latter result, it is known that $\CC(\reals^2|\textsc{Axis-Aligned-Strips}, r) \ge \lceil r/2 \rceil$~\cite{AC+10}. Recently it was shown~\cite{KP11} that $\reals^3|\textsc{Translates}(\reals^3_+)$ is cover-decomposable, giving cover-decomposability of $\reals^2|\textsc{Homothets}(T)$ for any triangle $T$ and a new proof (c.f.~\cite{Ke07}) for $\reals^2|$\textsc{Bottomless-Axis-Aligned-Rectangles}; the former contrasts with the fact that $\reals^2|\textsc{Discs}$ is not cover-decomposable~\cite{PTT09}.

For $\reals^2|\textsc{Halfspaces}$ several results are known. Smorodinsky and Yuditsky~\cite{SY10} proved
$\CCDual(\delta) = \lceil \delta/2 \rceil$ (improving upon~\cite{BBSW10}) and $\CC(r) \ge \lceil r/3 \rceil.$ They note the latter is not tight, since Fulek~\cite{Fulek10} showed $\CC(3) = 2$.

Aside from the tree-based construction of \cite{PTT09}, we mention two other indecomposable constructions. First, the Hales-Jewett theorem is used in \cite{PTT09} to show that $\CC(\reals^2|\textsc{Lines}, r) = 1$ for all $r$.
Second, \cite{Pa09m} gives the following indecomposable construction, which is smaller than that of \cite{PTT09}. The ground set of the hypergraph is the set of all strings on at most $r-1$ B's and at most $r-1$ R's.
For every $c$ in $\mathbf{Z}_{\ge 0}^r$ with $\sum_i c[i] \le r-1$, there is a hyperedge $\{\MR^{c[1]}, \MR^{c[1]}\MB\MR^{c[2]}, \dotsc, \MR^{c[1]}\MB\MR^{c[2]}\MB\dotsb\MB\MR^{c[r]}\}$, and another hyperedge obtained by swapping the roles of B and R.
Then \cite{Pa09m} shows such hypergraphs have $\Chi=1.$


A recent paper of Chan et al.~\cite{CGKS12} centers around \emph{quasi-uniform} distributions over set covers, which are fractional packings of set covers. In contrast, $\cd$ deals with integral packing of set covers. The inspiration in \cite{CGKS12} is rooted in geometric settings, but they arrive at general combinatorial results.

For a hypergraph $H$ let $M_H$ be its 0-1 incidence matrix, with rows for edges and columns for vertices.
Then the following three properties are equivalent~\cite{Sc03}: (i) for every hypergraph obtained from $H$ by deleting or duplicating vertices (columns), $\Chi = r$; (ii) the fractional vertex cover polytope $\{x \mid x \ge 0, M_Hx \ge 1\}$ has the \emph{integer decomposition property}; (iii) the \emph{blocker} of $H$ is \emph{Mengerian}. A special case of such hypergraphs are \emph{balanced} hypergraphs, where no submatrix of $M_H$ is the incidence matrix of an odd cycle.


A generalization of polychromatic $c$-colouring to \emph{$c$-strong colouring} is considered in \cite{BWY12}: every hyperedge of size at least $c$ must receive all $c$ colours, and every smaller hyperedge must receive no colour twice. This can be further generalized to requiring an arbitrary lower bound $f(S)$ on the number of colours for every set $S$; when $f$ is the maximum of two supermodular functions this is the classical \emph{supermodular colouring} problem solved originally by Schrijver~\cite{Sc85}.

\section{Hypergraphs of Bounded Edge Size}\label{sec:bdsize}
To get good upper bounds on $\CCDual(\Hgr(\R), \delta)$, we will use the Lov\'asz Local Lemma (LLL):
\begin{lemma}[LLL, \cite{LLL}]Consider a collection of ``bad" events such that each one has probability at most $p$, and such that each bad event is independent of the other bad events except at most $D$ of them. (We call $D$ the \emph{dependence degree}.) If $p (D+1)\mathrm{e} \le 1$ then with positive probability, no bad events occur.
\end{lemma}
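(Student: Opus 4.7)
I would prove the symmetric Lov\'asz Local Lemma via the classical inductive argument of Erd\H{o}s and Lov\'asz. Set $x := 1/(D+1)$. The core technical claim is the following strengthening of the conclusion: for every bad event $A$ and every subset $S$ of the other bad events (with $A \notin S$),
\[
\Pr\!\Big[A \,\Big|\, \bigcap_{B \in S} \overline{B}\Big] \le x.
\]
Once this is established, a telescoping application of the chain rule yields $\Pr\!\big[\bigcap_i \overline{A_i}\big] = \prod_i \Pr\!\big[\overline{A_i} \bigm| \bigcap_{j<i} \overline{A_j}\big] \ge (1-x)^n > 0$, which is the conclusion of the lemma.

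I would prove the claim by induction on $|S|$. The base case $S = \emptyset$ is immediate: $\Pr[A] \le p \le 1/(e(D+1)) \le x$ by the hypothesis $pe(D+1) \le 1$. For the inductive step, split $S$ into $S_1$, the events in $S$ lying in the dependency neighbourhood of $A$ (so $|S_1| \le D$), and $S_2 = S \setminus S_1$, and expand
\[
\Pr\!\Big[A \,\Big|\, \bigcap_{B \in S} \overline{B}\Big]
= \frac{\Pr\!\big[A \cap \bigcap_{B \in S_1} \overline{B} \bigm| \bigcap_{C \in S_2} \overline{C}\big]}{\Pr\!\big[\bigcap_{B \in S_1} \overline{B} \bigm| \bigcap_{C \in S_2} \overline{C}\big]}.
\]
The numerator is at most $\Pr[A \mid \bigcap_{C \in S_2} \overline{C}] = \Pr[A] \le p$: the inequality is monotonicity and the equality uses that $A$ is mutually independent of the $\sigma$-algebra generated by the events in $S_2$, which is part of the LLL hypothesis.

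For the denominator, enumerate $S_1 = \{B_1, \dots, B_m\}$ with $m \le D$ and unfold via the chain rule as $\prod_{j=1}^{m} \Pr[\overline{B_j} \mid \bigcap_{i<j} \overline{B_i} \cap \bigcap_{C \in S_2} \overline{C}]$. Each factor is $1 - \Pr[B_j \mid \ldots]$, and the conditioning set here is an intersection of complements of $(|S|-1)$ bad events, so the inductive hypothesis applies to give $\Pr[B_j \mid \ldots] \le x$. Hence the denominator is $\ge (1-x)^m \ge (1 - 1/(D+1))^D \ge 1/e$. Combining, $\Pr[A \mid \bigcap_B \overline{B}] \le pe \le x$ by hypothesis, which closes the induction.

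The main care-point is the denominator step: one must unfold the joint conditional probability via the chain rule and, at each stage, verify that the conditioning event is of the form ``intersection of complements of bad events'' so that the inductive hypothesis applies syntactically. This is routine bookkeeping but easy to bungle; the rest of the argument is essentially algebraic manipulation of the quantities $p$, $D$, and the auxiliary slack $x = 1/(D+1)$.
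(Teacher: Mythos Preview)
Your argument is the standard Erd\H{o}s--Lov\'asz inductive proof of the symmetric Local Lemma, and it is correct as written. The paper itself does not prove this lemma at all: it is stated with a citation to the original source and then used as a black box in the proof of \prettyref{proposition:lllusage} and elsewhere. So there is no ``paper's own proof'' to compare against; you have supplied one where the authors simply quote the result.

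One minor remark on presentation: in the denominator step you apply the inductive hypothesis to $B_j$ conditioned on $\{B_1,\dots,B_{j-1}\}\cup S_2$. For this to be a valid instance of the claim you need $B_j$ not to lie in that conditioning set, which holds because the $B_i$ are distinct and $S_1\cap S_2=\varnothing$; it is worth saying this explicitly. Also, the statement of the lemma in the paper phrases the independence hypothesis as ``each bad event is independent of the other bad events except at most $D$ of them,'' which you correctly interpret as \emph{mutual} independence from the collection outside the dependency neighbourhood (this is what is needed for $\Pr[A\mid \bigcap_{C\in S_2}\overline{C}]=\Pr[A]$); pairwise independence alone would not suffice, so if you were writing this up formally you should make that reading explicit.
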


\noindent Our first tool extends a standard argument about Property B~\cite[Theorem 5.2.1]{AS08}.\begin{proposition}\label{proposition:lllusage}
$\CCDual(\Hgr(R), \delta) \ge \lfloor\delta/\ln(\mathrm{e}R\delta^2) \rfloor.$
\end{proposition}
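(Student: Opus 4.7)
The plan is to apply the Lov\'asz Local Lemma to a uniformly random $[t]$-colouring of the hyperedges, with $t := \lfloor \delta/\ln(eR\delta^2) \rfloor$. Before doing so, I would first reduce to the case where $H$ is $\delta$-regular. Concretely, for each $v \in V$ choose an arbitrary subset $S_v \subseteq \{E \in \E : v \in E\}$ of size $\delta$, and form $H' = (V, \E')$ by setting $E' := \{v \in E : E \in S_v\}$ for every $E \in \E$. Then $\deg_{H'}(v) = \delta$ and $|E'| \le |E| \le R$. Since each $E'$ is contained in the original $E$, any cover $t$-decomposition of $H'$ lifts under the natural bijection $E' \leftrightarrow E$ to a cover $t$-decomposition of $H$; hence it suffices to prove the bound assuming $H$ is $\delta$-regular.

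On such a regular $H$, colour each hyperedge independently and uniformly at random with a colour from $[t]$. For every vertex $v$ declare the bad event $B_v$ that some colour in $[t]$ is absent from the hyperedges through $v$. A union bound over the $t$ colours gives
\[
\Pr[B_v] \le t\,(1 - 1/t)^{\delta} \le t\,e^{-\delta/t} =: p.
\]
Since $B_v$ depends only on the colours of the $\delta$ hyperedges incident to $v$, the events $B_v$ and $B_u$ can be correlated only when some hyperedge contains both $u$ and $v$. The number of such $u$ is at most $\sum_{E \ni v}(|E|-1) \le \delta(R-1)$, so the dependence degree satisfies $D + 1 \le \delta R$.

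Finally, I would verify the symmetric LLL condition $ep(D+1) \le 1$. Using $t \le \delta$,
\[
ep(D+1) \;\le\; e \cdot t\,e^{-\delta/t}\cdot \delta R \;\le\; eR\delta^{2}\,e^{-\delta/t},
\]
which is at most $1$ exactly when $\delta/t \ge \ln(eR\delta^{2})$, i.e.\ for our chosen $t$. The LLL then guarantees that with positive probability no $B_v$ occurs, producing a cover $t$-decomposition of $H$. The main subtlety I anticipate lies in the regularisation step: one has to check that the shrunk hypergraph still lies in $\Hgr(R)$ (edges can only get smaller, and empty edges are harmless) and that any decomposition of $H'$ genuinely re-expands to one of $H$ (covers only grow when edges are enlarged). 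Once that combinatorial bookkeeping is in place, the LLL computation above is routine.
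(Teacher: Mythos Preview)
Your proposal is correct and follows essentially the same route as the paper: reduce to a $\delta$-regular hypergraph by shrinking hyperedges, colour edges uniformly at random with $t$ colours, define the bad event $B_v$ that some colour misses $v$, bound $\Pr[B_v]\le t(1-1/t)^\delta$ and the dependence degree by $R\delta$, and check the symmetric LLL hypothesis for $t=\lfloor\delta/\ln(eR\delta^2)\rfloor$. The only cosmetic difference is that you pass through the cruder estimate $t\le\delta$ in the final inequality, whereas the paper keeps the factor $t$ and directly verifies $R\delta t\, e^{-\delta/t}\le 1/e$; both computations go through.
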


\begin{proof}
Given any hypergraph $H = (V, \E)$ where every edge has size at most $R$ and such that each $v \in V$ is covered at least $\delta$ times, we must show for $t = \lfloor\delta/\ln(\mathrm{e}R\delta^2) \rfloor$ that $\ChiDual(H) \ge t$, i.e.\ that $\E$ can be decomposed into $t$ disjoint set covers. It will be helpful here and later to make the degree of every vertex \emph{exactly} $\delta$, since this bounds the dependence degree. (A similar observation was made by Mani-Levitska and Pach~\cite{MLP87} in the setting of cover-decomposing discs.) This is without loss of generality: otherwise whenever $\deg(v) > \delta$ shrink some $E \ni v$ to $E \bs \{v\}$ until $\deg(v)$ drops to $\delta$; then observe that undoing shrinking preserves the property of being a set cover.

Consider the following randomized experiment: for each hyperedge $E \in \E$, assign a random colour between $1$ and $t$ to $E$. If we can show that with positive probability, every vertex is incident with a hyperedge of each colour, then we will be done.

For each vertex $v$ define the \emph{bad event} $\Ev_v$ to be the event that $v$ is not incident with a hyperedge of each colour. The probability of $\Ev_v$ is at most $t(1-\frac{1}{t})^\delta,$ by using a union bound. The event $\Ev_v$ only depends on the colours of the hyperedges containing $v$; therefore the events $\Ev_v$ and $\Ev_{v'}$ are independent unless $v, v'$ are in a common hyperedge. In particular the dependence degree is at most $(R-1)\delta < R\delta$, since each edge containing $v$ contains at most $R-1$ other vertices. It follows by LLL that if
$$R\delta t(1-\tfrac{1}{t})^\delta \le 1/\mathrm{e},$$
then with positive probability, no bad events happen and we are done. We can verify that $t = \delta / \ln(\mathrm{e}R\delta^2)$ satisfies this bound.
\qquad\end{proof}

We will next show that the bound can be raised to $\Omega(\delta/\ln R)$. Intuitively, our strategy is the following. The lower bound on $\cd$ given by \prettyref{proposition:lllusage}, $\delta/\ln (R\delta),$ is already $\Omega(\delta/\ln R)$ in the special case that $\delta \le R^{O(1)}$. For hypergraphs where $\delta \gg R$ we will show that we can partition $\E$ into $m$ parts $\E = \biguplus_{i=1}^m \E_i$ so that $\delta(V, \E_i)$ is at least a constant fraction of $\delta/m$, and such that $\delta/m$ is polynomial in $\R$. Thus by \prettyref{proposition:lllusage} we can extract $\Omega((\delta/m)/\ln \R)$ set covers from each $(V, \E_i)$, and taken all together we obtain $\Omega(\delta/\ln \R)$ set covers.

In fact, it will be enough to consider splitting $\E$ into two parts at a time, recursively. Then ensuring $\delta(V, \E_i) \gtrsim \delta/2 ~ (i=1, 2)$ amounts to a dual discrepancy-theoretic problem: we must 2-colour the hyperedges by $\pm 1$ so that for each vertex, the sum of the incident hyperedges' colours is in $[-D, D]$, with the discrepancy $D$ as small as possible. To get a short proof of a weaker version of \prettyref{theorem:1}, we use a theorem of Beck and Fiala~\cite{BF81}. Later, in \prettyref{sec:structured}, we will extend their approach to trees.

\begin{theorem}[Beck \& Fiala~\cite{BF81}, stated in the dual]\label{theorem:bf}
In a $\delta$-regular hypergraph $H = (V, \E)$ with all edges of size at most $R$, we can partition the edge set into $\E = \E_1 \uplus \E_2$ such that $\delta(V, \E_i) \ge \delta/2 - R$ for each $i \in \{1, 2\}$.
\end{theorem}

Here is how the Beck-Fiala Theorem gives a near-optimal bound on $\CCDual$.
\begin{proposition}\label{proposition:fooo}
$\CCDual(\Hgr(R), \delta) \ge \Omega(\delta/\ln R).$
\end{proposition}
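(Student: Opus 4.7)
The plan is to apply the Beck--Fiala splitting of \prettyref{proposition:bf} recursively to drive the minimum degree down to something polynomial in $R$, and then invoke the LLL bound of \prettyref{proposition:lllusage} on each of the resulting pieces. The key reason this composes correctly: if $\E = \biguplus_i \E_i$ and each $(V,\E_i)$ admits a decomposition into set covers, then concatenating these decompositions gives a decomposition of $\E$ itself, since the $\E_i$ are pairwise disjoint.

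First, by shrinking (as in the paragraph preceding \prettyref{proposition:lllusage}) we may assume $H$ is $\delta$-regular, and note that the bound $R$ on edge sizes is preserved by both shrinking and the Beck--Fiala splitting. Fix a constant $c \ge 2$. In the base case where $\delta \le 2R^c$, a direct application of \prettyref{proposition:lllusage} already gives $\Chi'(H) \ge \delta/\ln(\mathrm{e}R\delta^2) \ge \delta/\ln(\mathrm{e}R \cdot (2R^c)^2) = \delta/O(\ln R)$ and we are done. Otherwise $\delta > 2R^c$; set $k := \lfloor \log_2(\delta/R^c)\rfloor$ and apply \prettyref{proposition:bf} to every current piece, $k$ times in all. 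This produces $2^k$ pairwise edge-disjoint sub-hypergraphs $(V,\E_1),\ldots,(V,\E_{2^k})$ partitioning $\E$. Unrolling the recursion, each $(V,\E_i)$ has minimum degree at least
\[
\delta_k \ \ge\ \delta/2^k - R\bigl(1 + \tfrac12 + \tfrac14 + \cdots\bigr) \ \ge\ \delta/2^k - 2R \ \ge\ R^c - 2R,
\]
which is $\Omega(R^c)$ for $R$ sufficiently large (small $R$ is absorbed into the $O$-constant).

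Now apply \prettyref{proposition:lllusage} to each $(V,\E_i)$: since its minimum degree $\delta_k$ is $\Omega(R^c)$ but still $\mathrm{poly}(R)$, and its edges have size $\le R$, we obtain $\lfloor \delta_k/\ln(\mathrm{e}R\delta_k^2)\rfloor = \Omega(R^c/\ln R)$ set covers within $\E_i$. Taking the union over $i$ of these decompositions, $\E$ is decomposed into
\[
2^k \cdot \Omega(R^c/\ln R) \ =\ \Omega\bigl((\delta/R^c)\cdot R^c/\ln R\bigr) \ =\ \Omega(\delta/\ln R)
\]
set covers, which is what we wanted.

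The only real obstacle is bookkeeping: one must choose the truncation level $k$ so that the additive Beck--Fiala loss (which only sums to $2R$, since it telescopes geometrically) is swamped by $\delta/2^k$, while simultaneously keeping the residual degree $\delta_k$ small enough that the $\ln(\mathrm{e}R\delta_k^2)$ factor in the final LLL step remains $O(\ln R)$ rather than inflating to $O(\ln \delta)$. Picking any fixed $c \ge 2$ and stopping when $\delta_k$ is in the range $[R^c - 2R,\, 2R^c]$ accomplishes both.
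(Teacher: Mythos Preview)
Your proposal is correct and follows essentially the same approach as the paper: recursively apply Beck--Fiala splitting until the minimum degree is polynomial in $R$, then invoke \prettyref{proposition:lllusage} on each piece. The only cosmetic differences are that the paper stops splitting once the degree falls into $[R,4R)$ (rather than $\Theta(R^c)$) and does the bookkeeping by counting the final number $M$ of pieces via a total-waste argument rather than by summing the geometric series directly; one small point you glossed over is that \prettyref{proposition:bf} as stated requires regularity, so (as the paper does) you should re-shrink each piece to regularity before the next split.
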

\begin{proof}
If $\delta < 4R$ this already follows from \prettyref{proposition:lllusage}. Otherwise apply \prettyref{theorem:bf} to the initial hypergraph, and then use shrinking to make both the resulting $(V, \E_i)$'s into regular hypergraphs. Iterate this process; stop splitting each hypergraph once its degree falls in the range $[R, 4R)$, which is possible since $\delta \ge 4R \Rightarrow \delta/2-R \ge R$. Let $M$ be the number of hypergraphs at the end.

Observe that in applying the splitting-and-shrinking operation to some $(V, \E)$ to get $(V, \E_1)$ and $(V, \E_2)$, the sum of the degrees of $(V, \E_1)$ and $(V, \E_2)$ is at least the degree of $(V, \E)$, minus $2R$ ``waste". It follows that the total waste is at most $2R(M-1)$, and we have that $4RM + 2R(M-1) \ge \delta$. Consequently $M \ge \delta/6R$. As sketched earlier, applying \prettyref{proposition:lllusage} to the individual hypergraphs, and combining these vertex covers, shows that $\ChiDual \ge M \lfloor R/\ln(\mathrm{e}R^3) \rfloor$ which gives the claimed bound.
\qquad\end{proof}

Now we get to the better bound with the correct multiplicative constant. We will need the following proposition, which when stated in the dual setting, says that the discrepancy of a hypergraph is at most $2 \sqrt{R \ln(R\Delta)}$.
\begin{proposition}\label{proposition:chernoff}
In a hypergraph $H = (V, \E)$ with maximum degree $\Delta$ and maximum edge size $R$, we can partition the edge set into $\E = \E_1 \uplus \E_2$ such that for every vertex $v,$ the degree of $v$ in each $\E_i$ is at least $\mathrm{degree}_H(v)/2 - \sqrt{\Delta \ln(R\Delta)},$ assuming $R\Delta > 1$.
\end{proposition}

\begin{proof}
Let $d_v$ be short for $\mathrm{degree}_H(v)/2.$ Independently assign each edge to $\E_1$ or $\E_2$ uniformly at random. For a $\lambda\ge0$ to be fixed later, for each vertex, let the event $\Ev_v$ be that the degree of $v$ is less than $\mathrm{degree}_H(v)/2 - \lambda$ in some $\E_i.$ Chernoff bounds \cite[Corollary A.1.2]{AS08} imply that $\Ev_v$ has probability at most $2\exp(-2\lambda^2/d_v) \le 2\exp(-2\lambda^2/\Delta)$.

If with positive probability no bad events happen, then we are done. As in the proof of \prettyref{proposition:lllusage}, the dependence degree is strictly less than $R\Delta$. Therefore the LLL is applicable provided that $2 \exp(-2\lambda^2/\Delta) R \Delta \le 1/\mathrm{e},$ which is satisfied by $\lambda = \sqrt{\Delta \ln(2 \mathrm{e} R\Delta)/2}$. This gives the desired result under the condition that $R\Delta \ge 2\mathrm{e}$. To finish the proof, one must check the cases $R=1<\Delta$, $\Delta=1<R$, and $\Delta=R=2$, which are straightforward.
\qquad\end{proof}

Now we get to the main proof.

{\emph{Proof of \prettyref{theorem:1}}. We want to show that for all hypergraphs with minimum degree $\delta$ and maximum hyperedge size $R$, that $\Chi \ge \delta/(\ln R+O(\ln \ln R)).$ Due to the crude bound in \prettyref{proposition:fooo}, we may assume that $R$ is larger than any chosen fixed constant. Moreover, \prettyref{proposition:lllusage} gives us the desired bound when $\delta$ is at most polylogarithmic in $R$, so we assume $\delta \ge \ln^K R$ for a positive constant $K$ to be fixed later. By shrinking, we assume the hypergraph is $\delta$-regular.

Let $d_0 = \delta$ and $d_{i+1} = d_i/2 - \sqrt{d_i\ln(Rd_i)}$. We mimic the split-and-shrink proof of \prettyref{proposition:fooo}, using \prettyref{proposition:chernoff} for the splitting. After $i$ rounds, all $2^i$ hypergraphs are regular with degree at least $d_i$. We stop splitting after $T$ rounds, where $T$ will be fixed later to make $d_T$ and $\delta/2^T$ polylogarithmic in $R$. Using the evident bound $d_i \le \delta/2^i$, the total degree loss due to splitting is at most
\begin{align} \delta - 2^T d_T &= \sum_{i=0}^{T-1} 2^i(d_i-2d_{i+1})= \sum_{i=0}^{T-1} 2^i2 \sqrt{d_i\ln Rd_i}
\notag  \\&\le \sum_{i=0}^{T-1} 2^i2 \sqrt{(\delta/2^i)\ln (R\delta/2^i)} = 2\sqrt{\delta}\sum_{i=0}^{T-1} \sqrt{2}^i \sqrt{ \ln (R\delta/2^i)}.
\label{eq:gs}
\end{align}
In this series, $\sqrt{2}^i$ is increasing in $i$, and $\sqrt{ \ln (R\delta/2^i)}$ decreases with $i$ but not too quickly;
the ratio between the $i$th and $(i+1)$th terms is
$$\sqrt{\ln(R\delta/2^i)}/\sqrt{\ln(R\delta/2^{i+1})} = \sqrt{1+\ln 2/\ln (R\delta/2^{i+1})} \le \sqrt{1+\ln_R 2}<1.3$$
where the first inequality uses $\delta/2^{i+1} \ge 1$ and the second uses $R \ge 3$.
Hence the sequence in \eqref{eq:gs} grows at least geometrically with ratio $\sqrt{2}/1.3$ and its sum is within a constant factor of the final $i=T-1$ term, using the geometric series sum formula.

We deduce $\delta - 2^T d_T = O(\sqrt{\delta 2^T \ln (R\delta/2^T)})$. Pick $K=3$, and pick $T$ such that $\delta/2^T$ is between $\ln^3R$ and $\ln^3R/2$, then we have
$$d_T \ge \delta/2^T\Bigl(1 - O\bigl(\sqrt{2^T\delta^{-1} \ln (R\delta/2^T)}\bigr)\Bigr) = \delta/2^T(1 - O(\ln^{-1}(R))).$$
Consequently with \prettyref{proposition:lllusage} we see that
$$\ChiDual \ge 2^T d_T/(\ln R + O(\ln \ln R)) \ge \delta(1 - O(\ln^{-1}(R)))/(\ln R + O(\ln \ln R))$$
which gives the claimed bound.
\qquad\endproof

\subsection{Sparse Hypergraphs: Proof of \prettyref{corollary:sparse}}\label{sec:sparse}
We re-use the following fact, a consequence of Hall's theorem, that was used in \cite{ABB+09}; see also~\cite[Thm.~2.4.2]{LP86}.
\begin{fact}
A hypergraph has at most $\alpha|V'|+\beta|\E'|$ incidences between each $V' \subseteq V$ and $\E' \subseteq \E$, if and only if the following assignment problem has a valid solution: for every $e$ and every $v \in e$, we need to assign the incidence $(v, e)$ to either $v$ or $e$, and the number of total incidences assigned to each vertex (resp.~edge) is at most $\alpha$ (resp.~$\beta$).
\end{fact}

To prove \prettyref{corollary:sparse}, we apply the fact, and then use shrinking to remove the incidences assigned to vertices. We retain minimum degree $\delta-\alpha$ and maximum edge size $\beta$. Finally, the corollary follows from \prettyref{theorem:1}.

\subsection{Lower Bounds}
Now we show that the bounds obtained previously are tight.

\emph{Proof of \prettyref{theorem:bounds}(i)}. We want to show that for some constant $C,$ all $R \ge 2$, and all $\delta \ge 1,$ we have $\CCDual(\Hgr(R), \delta) \le \max\{1, C\delta/\ln R\}$.
Since $\CCDual(\Hgr(R), \delta)$ is non-increasing in $R$, we may reduce $R$ by a constant factor to assume that  $R = \tbinom{2k-2}{k-1}$ for some integer $k \ge 2$. Note this gives $k =  \Theta(\ln R)$.

Consider the hypergraph $H = \tbinom{[2k-1]}{k}^*$ in the introduction. It is $k$-regular, it has $\ChiDual(H) = 1$, and $R(H) = \tbinom{2k-2}{k-1} = R$.
If $\delta \le k$ then $H$ proves the theorem for small enough $C$, so assume $\delta \ge k$. Again by monotonicity, we may increase $\delta$ by a constant factor to make $\delta$ a multiple of $k$. Let $\mu = \delta/k$.

Consider the hypergraph $\mu H$ obtained by copying each of its edges $\mu$ times, for an integer $\mu \ge 1;$ note that it is $\delta$-regular. The argument in the introduction shows that any set cover has size at least $k$ and therefore
average degree at least $k \tbinom{2k-2}{k-1} / \tbinom{2k-1}{k} = k^2/(2k-1) = \Theta(\ln R)$. Thus $\CCDual(\mu H) = O(\delta / \ln R)$ which proves the theorem.
\qquad\endproof

Next, \prettyref{theorem:bounds}(ii) establishes the right multiplicative constant in the range $\delta = \omega(\ln R)$.
Our approach relies on the methods for neighbourhood hypergraphs established in \cite{FHKS02}; in fact using \cite{FHKS02} as a black box gives \prettyref{theorem:bounds}(ii) for the case $R \sim \delta$. As part of reaching the fuller range, \prettyref{claim:journey} will generalize a calculation \cite[\S 2.5]{FHKS02} for dominating sets in Erd\H{o}s-R\'enyi graphs.

\emph{Proof of \prettyref{theorem:bounds}(ii)}. We must show, given a sequence $\{(R_i, \delta_i)\}_i$ such that $R \to \infty, \delta \to \infty,$ and $\delta = \omega(\ln R)$ as $i \to \infty$, we have $\CCDual(\Hgr(R), \delta) \le (1+o(1))\delta/\ln(R)$. We
assume an additional hypothesis, that $R \ge \delta$; this will be without loss of generality as we can handle the case $\delta > R$ using the $\mu$-replication trick from the proof of \prettyref{theorem:bounds}(i), since our argument is again based on lower-bounding the minimum size of a set cover.

Let $\delta' = \delta(1+o(1))$ and $R' = R(1-o(1))$ be parameters that will be specified shortly.
We construct a random hypergraph with $n = R'^2\delta'$ vertices and $m = R'\delta'^2$ edges, where for each vertex $v$ and each edge $E$, we have $v \in E$ with independent probability $p = 1/R'\delta'$. Thus each vertex has expected degree $\delta'$ and each edge has expected size $R'$. A standard Chernoff bound together with $np = \omega(\ln m)$ shows the maximum edge size is $(1+o(1))R'$ asymptotically almost surely (\emph{a.a.s.}); pick $R'$ such that this $(1+o(1))R'$ equals $R$. Likewise, since $mp = \omega(\ln n)$ we may pick $\delta' = (1+o(1))\delta$ so that a.a.s.\ the minimum degree is at least $\delta.$ Now we prove that small set covers are not likely to exist:
\begin{claim}\label{claim:journey}
A.a.s.\ the minimum  set cover size is at least $\frac{1}{p}\ln(pn)(1-o(1))$.
\end{claim}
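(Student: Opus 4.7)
The plan is a first-moment union-bound argument. Fix $\epsilon > 0$ and set $k = \lfloor (1-\epsilon)\ln(pn)/p\rfloor$; letting $X_j$ denote the number of size-$j$ subfamilies of $\mathcal{E}$ that are set covers, it suffices to show $\sum_{j=1}^{k}\mathrm{E}[X_j] = o(1)$, since then Markov guarantees that a.a.s.\ the minimum set cover size exceeds $k$. Letting $\epsilon$ shrink to $0$ slowly recovers the stated $(1-o(1))$ factor. The key identity is
$$\mathrm{E}[X_j] \;=\; \binom{m}{j}\bigl(1-(1-p)^j\bigr)^n,$$
valid because edge--vertex incidences are mutually independent and a fixed family of $j$ edges fails to cover a specified vertex with probability $(1-p)^j$.

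For the vertex-covering factor I would expand $(1-p)^k = \exp(-pk - O(p^2 k))$ and note $p^2 k \le p\ln(pn) = (\ln R')/(R'\delta') \to 0$, so $(1-p)^k = (1+o(1))e^{-pk} \ge (1+o(1))(pn)^{-(1-\epsilon)}$, and therefore
$$\bigl(1-(1-p)^k\bigr)^n \;\le\; \exp\!\bigl(-n(1-p)^k\bigr) \;\le\; \exp\!\bigl(-(1+o(1))\, R'^{\,1+\epsilon}\delta'\bigr),$$
using $pn = R'$ and $n = R'^{\,2}\delta'$. For the binomial factor, $\binom{m}{k} \le (em/k)^k$, so $\ln\binom{m}{k} = O(k\ln m)$. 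The reduction at the start of the proof of \prettyref{theorem:bounds}(b) lets me assume $R \ge \delta$, which gives $\ln m = \ln(R'\delta'^{\,2}) = O(\ln R')$; hence $\ln\binom{m}{k} = O(R'\delta'\,(\ln R')^{2})$.

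Combining,
$$\ln\!\bigl(k\,\mathrm{E}[X_k]\bigr) \;\le\; O\!\bigl(R'\delta'\,(\ln R')^{2}\bigr) \;-\; (1+o(1))\, R'^{\,1+\epsilon}\delta',$$
which tends to $-\infty$ because the polynomial factor $R'^{\,\epsilon}$ dwarfs any power of $\log R'$. Since $\binom{m}{j}$ and $(1-(1-p)^j)^n$ are both monotone nondecreasing for $j \le k \ll m/2$, we get $\sum_{j=1}^{k}\mathrm{E}[X_j] \le k\,\mathrm{E}[X_k] = o(1)$, completing the plan. The only real obstacle is making the exponent comparison uniform in the joint limit; the clean $R'^{\,\epsilon}$ versus $\mathrm{polylog}(R')$ gap makes this robust, and even permits $\epsilon$ to shrink to $0$ at a rate up to roughly $1/\log\log R'$, which is plenty for the claimed $(1-o(1))$ lower bound.
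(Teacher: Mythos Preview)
Your proof is correct and follows essentially the same approach as the paper's: fix $\epsilon$, bound the probability that a fixed $k$-subfamily covers via $(1-(1-p)^k)^n \le \exp(-n(1-p)^k)$, compare against the union-bound factor $\binom{m}{k}\le m^k$, and observe that the resulting comparison is polylog$(R')$ versus $R'^{\epsilon}$. Your extra summation over $j\le k$ is harmless but unnecessary, since any set cover of size $<k$ can be padded to one of size exactly $k$, so it suffices to rule out covers of size exactly $k$.
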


\begin{proof}
Fix $\epsilon>0$ and let us a.a.s.~lower-bound the minimum set cover size by $\frac{1}{p}\ln(pn)(1-\epsilon)$.
For any fixed collection of $s = \frac{1}{p}\ln(pn)(1-\epsilon)$ edges,
let $P$ be the probability that it is a set cover. In the limit, $p$ tends to 0, so $(1-p)=\exp(-p+\Theta(p^2))$ and
\begin{align*}
P = (1-(1-p)^s)^n &= \bigl(1 - \exp(-s(p+\Theta(p^2)))\bigr)^n
\\&= (1 - \exp(-(1+\Theta(p))(1-\epsilon)\ln(pn)))^n
\\&= (1 - (pn)^{-1+\epsilon-\Theta(p)})^n.\end{align*}
Since $1-x \le \exp(-x)$ for all $x$,
$$ P \le \exp( - n(pn)^{-1+\epsilon-\Theta(p)}) = \exp( - p^{-1}(pn)^{\epsilon-\Theta(p)}).$$
There are $\tbinom{m}{s} \le m^s = \exp(s \ln m)$ such collections, so a.a.s.~none are set covers provided that
$$s \ln m- p^{-1}(pn)^{\epsilon-\Theta(p)} \to -\infty.$$
In turn, expanding the definition of $s$, we need precisely that
$$p^{-1}(\ln(pn)(1-\epsilon)\ln m- (pn)^{\epsilon-\Theta(p)}) \to -\infty.$$
To see this, note that $p = o(1)$, $pn=\omega(1)$, and $m$ is polynomial in $pn$ (since $R \ge \delta$).
\qquad\end{proof}

Using \prettyref{claim:journey} we can complete the proof of \prettyref{theorem:bounds}(ii), since the former implies that the maximum number of disjoint set covers $\ChiDual$ is at most $(1+o(1))mp/\ln(pn) = (1+o(1))\delta'/\ln(R') = (1+o(1))\delta/\ln(R).$
\qquad\endproof

Aside from the results above, not much else is known about specific values of $\CCDual(\Hgr(R), \delta)$ for small $R, \delta$. The Fano plane gives $\CC(\Hgr(3), 3) = 1$: if its seven sets are partitioned into two parts, one part has only three sets, and it is not hard to verify the only covers consisting of three sets are pencils through a point and therefore preclude the remaining sets from forming a cover. Moreover, Thomassen~\cite{Th92} showed that every 4-regular, 4-uniform hypergraph has Property B; together with monotonicity we deduce that $\CC(\Hgr(3), 4) \ge \CC(\Hgr(4), 4) \ge 2$.

\comment{\subsection{Bounds on $\PP'$}\label{sec:ppbounds}
In the previous section we found tight bounds for $\CCDual(\Hgr(R), \delta)$. We now consider the problem obtained by interchanging packing with covering, which is to find tight bounds for $\PP'(\Hgr(R), \Delta)$, and turns out to be more straightforward.
Evidently this is the same as finding a bound for $\PP(\Hgr(\Delta), R)$ so we consider the vertex-colouring version for simplicity.
\begin{theorem}$\PP(\Hgr(R), \Delta) = \Theta(R \Delta)$ for $\Delta \ge 1, R \ge 2$.\label{theorem:pplower}
\end{theorem}

\begin{proof}
The following greedy algorithm shows $\PP(\Hgr(R), \Delta) \le 1+\Delta(R-1)$: iteratively pick any vertex $e$ not yet coloured, and assign it any colour distinct from its neighbours. It has at most $\Delta(R-1)$ neighbours so the bound follows.

The lower bound was shown in~\cite[\S 3]{AEH08}. It suffices to consider the case that $R$ is even. Take a hypergraph with $\Delta+1$ vertex ``groups" of size $R/2$ and include a hyperedge containing every pair of groups. Each vertex must get a different colour so $\PP'(\Hgr(R), \Delta) \ge (R+1)\Delta/2$.
\qquad\end{proof}

In fact the bound $\chi \le 1+\Delta(R-1)$ from the greedy algorithm is exactly tight for many values of $\Delta, R$. An \emph{$(n, R, 2)$-Steiner system} is a partition of the edges of $K_n$ into $R$-cliques. Note such hypergraphs are $\frac{n-1}{R-1}$-regular. For parameters such that a system exists, we get $\chi = n = 1+\frac{n-1}{R-1}(R-1)$, matching the greedy bound. There is a relatively dense family of such systems: when $n \ge R^2$, a $(n', R', 2)$-Steiner system exists~\cite{West10} with $n' \approx n, R' \approx R$ up to a factor of 2; and there also are small known families, such as $(n, 3, 2)$-Steiner systems (Steiner triple systems) for all $n \equiv 1, 3 \pmod{6}$.}

\comment{\subsection{Approximation-Algorithmic Perspective}
Using the constructive LLL framework of Moser-Tardos~\cite{MT10}, we can turn \prettyref{theorem:1} into a constructive randomized algorithm with polynomial running time. (We can even derandomize it~\cite{CGH10} but in doing so we lose a $(1+\epsilon)$ multiplicative factor in the guarantee of \prettyref{theorem:1}, where $\epsilon>0$ is arbitrary and the deterministic polynomial running time has degree $O(1/\epsilon)$.) Since $\ChiDual \le \delta$, this yields a $(\ln R + O(\ln \ln R))$-approximation algorithm to compute $\ChiDual$. In fact, this is roughly best possible, due to the hardness result of Feige et al.~\cite{FHKS02} for domatic number: for every $\epsilon>0$ the domatic number problem is hard to approximate within $(1-\epsilon) \ln \Delta$ (recall that $R$ for the neighbourhood hypergraph is $1+\Delta$ of the original graph).}

\comment{Likewise, the greedy algorithm for $\chi$ is a factor-$\Delta$ approximation, and this cannot be improved much, even for graphs. Specifically,  there is a $0<C<1$ such that approximation of the chromatic number within $n / 2^{(\ln n)^C}$ implies quasi-polynomial time deterministic algorithms for $\NP$~\cite{Zuck07}; then note that $\Delta < n$.

Here is one way to get a slightly improved approximation of $\ChiDual$. Let $\omega'$ denote the maximum number of pairwise intersecting hyperedges in a hypergraph --- the largest \emph{edge-clique}. In any hypergraph, $\omega'$ is a lower bound on $\ChiDual$, and it is at least as large a lower bound as $\Delta$. For example, the constructions in \prettyref{sec:ppbounds} only have high chromatic number because they have large cliques, i.e.~their duals have $\ChiDual = \omega' = |\E|$.

First we discuss the case $R=2$: in a multigraph $G$, let $\Tri$ be $\max_{u, v, w} \#E(G[\{u,v,w\}]),$ the most number of edges in any triangle. One can show $\omega' = \max\{\Tri, \Delta\}$; and Gupta~\cite[Thm.~4D]{Gupta78} showed
$$\ChiDual \le \max\{\lfloor\frac{5\Delta+2}{4}\rfloor, \Tri\}$$
whence $\omega' \le \ChiDual \le \lfloor \frac{5\omega'+2}{4} \rfloor$, an improved ratio compared to Shannon's theorem $\Delta \le \ChiDual \le \lfloor \frac{3}{2}\Delta \rfloor$. Gupta's theorem thus provides a proof of the following theorem of Nishizeki \& Sato~\cite{NS84} (see also \cite[\S 6.4]{NC88}), which is best ratio possible in light of Holyer's hardness result~\cite{Holyer81}.
\begin{proposition}[\cite{NS84}]
There is a 4/3-approximation algorithm for $\ChiDual$ in multigraphs.
\end{proposition}

\begin{proof}
First, we compute $\omega'$. If $\omega' \le 2$ the multigraph has $\Delta \le 2$ and we can compute $\ChiDual$ exactly. Otherwise, have the algorithm output $\omega'$. By Gupta's theorem, $\omega' \le \ChiDual \le \lfloor \frac{5\omega'+2}{4} \rfloor$; and by verifying that $\lfloor \frac{5\omega'+2}{4} \rfloor \le \frac{4}{3} \omega'$ for $\omega' \ge 3$, the proposition follows.
\qquad\end{proof}
We note that a better algorithm for large $\ChiDual$ is known~\cite{SS08}: its output is between $\ChiDual$ and $\ChiDual+o(\ChiDual)$.

For any fixed $R>2$, it is still possible to compute $\omega'$ in high-degree polynomial time along the following lines. It is known that for any edge-clique with edges of size at most $R$, there is a set of at most $f(R)$ vertices, a \emph{kernel}, such that every pair of edges in the clique have a mutual intersection vertex with the kernel. The best bound on $f(R)$, due to Tuza~\cite{Tuza85}, gives $f(R) = \Theta(4^R/\sqrt{R})$. Then to compute the largest edge-clique $\E'$, it suffices to exhaustively guess the kernel $K$, and then exhaustively guess the \emph{trace} $\mathcal{T} = \{e \cap K \mid e \in \E'\}$ (an edge-clique), from which we determine $\E' = \{e \in \E \mid e \cap K \in \mathcal{T}\}$. In total this takes at most $|V|^{f(R)}2^{2^{f(R)}}|\E|$ time. Hence, $\omega'$ could be used for approximation algorithms, and we ask the following question:
\begin{question}
Is there $\epsilon>0$ so that in all hypergraphs, $\ChiDual \le R\omega'/(1+\epsilon)$ (or equivalently, $\chi \le \Delta\omega/(1+\epsilon)$)?
\end{question}
}

\comment{
We took $\Delta$ as the ``natural lower bound" for the chromatic index/covering decomposition number of a hypergraph. However, a larger natural lower bound for the chromatic index is the largest size of any pairwise intersecting set of hyperedges; denote this by $\omega'$. For example, the lower bounds in \prettyref{theorem:pplower} amount to saying that $cd \ge \omega'$. For structured hypergraphs, can we get good upper bounds on the chromatic index in terms of $\omega'$?\footnote{There is some work \cite{K04} giving upper bounds on $\chi$ in terms of $\omega$ for geometrically-structured hypergraphs; we want it for line graphs of structured hypergraphs.} Dumitrescu and Jiang~\cite{DJ10} do this for some geometrically defined hypergraphs. For ordinary multigraphs, let $\Tri$ be $\max_{u, v, w} \#E(G[\{u,v,w\}]),$ the largest number of edges in any induced 3-vertex subgraph. Then $\omega' = \max\{\Delta, \Tri\}$, and since }

\section{Paths in Trees}\label{sec:structured}
Let $\textsc{TreeEdges}|\textsc{Paths}$ denote the family of all hypergraphs that can be expressed in the following form: the ground set is the edge set $E_T$ of some undirected tree $T$, and for each hyperedge $E$ of $H$, there are vertices $u, v \in V_T$ so that $E$ is the set of all edges in $T$'s $u$-to-$v$ path.
Note that interval hypergraphs correspond to the special case that the tree is itself a path graph.

Our main result for these tree hypergraphs is that they are cover-decomposable, and moreover the cover-decomposition number is linear in $\delta$. To prove this theorem we use a counting lemma \cite[Lemma 4]{KPP08} for linear program extreme points. While \cite[Lemma 4]{KPP08} is stated for a linear program with just upper bounds on the capacity of edges, what we need is a result when there are both upper and lower bounds. However, the proof method is identical, as we will explain. 

We give a simpler sketch of the main idea just for intuition. We will write a linear program about two-colouring paths so as to achieve small dual discrepancy. For every path/hyperedge $P$ let $x_P$ and $y_P$ be non-negative variables constrained by $x_P + y_P = 1$. The idea is that $(x_P, y_P) = (1, 0)$ corresponds to placing $P$ in $\E_1$, and $(x_P, y_P) = (0, 1)$ corresponds to placing $P$ in $\E_2$. Consider the linear program
$$\mathbf{0} \le x, y; x+y = \mathbf{1}; \forall e, \sum_{P: e \in P} x_P = \textrm{degree}_H(e)/2;
\forall e, \sum_{P: e \in P} y_P = \textrm{degree}_H(e)/2.$$
If we could find an feasible integral solution $(x, y)$, then every vertex would be covered an equal number of times by $\E_1$ and $\E_2$, and the dual discrepancy would be 0. There is always a feasible fractional solution $x = y = \mathbf{\frac{1}{2}}$, but it is not guaranteed that a feasible integral solution exists. But, we can prove that there is an integer solution with $\pm O(1)$ additive violation in each constraint.
In the real proof, we will eliminate the $y$ variables according to $y = \mathbf{1}-x,$ and seek an unequal split in order to get a better factor in the final result.

\newcounter{oldthm}
\setcounter{oldthm}{\value{theorem}}
\setcounter{theorem}{\value{thm4}}

\begin{theorem}\label{theorem:kppplus}
$\CCDual(\textsc{TreeEdges}|\textsc{Paths}, \delta) \ge 1+\lfloor(\delta-1)/5\rfloor$.
\end{theorem}

\setcounter{theorem}{\value{oldthm}}

\begin{proof}
We use induction on $\delta.$ We assume that $\delta \ge 6$ since the smaller cases are trivial. Consider the following linear program (without objective).
\begin{align}
\mathbf{0} &\le x \le \mathbf{1} \tag{box}\label{eq:box} \\ \forall e \in E_T, \sum_{P: e \in P} x_P &\ge \ell_e := 3 \label{eq:lb}\tag{lb} \\
\forall e \in E_T, \sum_{P: e \in P} x_P &\le u_e := 3+\textrm{degree}_H(e)-\delta(H). \label{eq:ub}\tag{ub}
\end{align}
This linear program has at least one feasible solution: $x = \mathbf{\frac{3}{\delta}}$, which is easily seen to satisfy \eqref{eq:box} and \eqref{eq:lb}, and which satisfies \eqref{eq:ub}:
$$3\textrm{degree}_H(e)/\delta \le 3 + \textrm{degree}_H(e) - \delta \Longleftrightarrow 0 \le (\delta-3)(\textrm{degree}_H(e)-\delta)/\delta.$$
We run the following iterated relaxation algorithm, where the initial LP is \eqref{eq:box}--\eqref{eq:ub}.

\vspace{3mm}

\begin{center}
\noindent \begin{boxedminipage}{0.9\textwidth}
\noindent 1.\,\,Let $x^*$ be an extreme point solution of the current LP. \\
\noindent 2.\,\,If some path $P$ has $x^*_P$ equal to 0 or 1, then \emph{fix $x_P$}: from now on we consider each appearance of $x_P$ in the linear program to be a constant, equal to $x^*_P$. \\
\noindent 3.\,\,If for some edge $e$ there are only three non-fixed paths passing through $e$, then \emph{relax $e$}: delete the constraints \eqref{eq:lb} and \eqref{eq:ub} for $e$. \\
\noindent 4.\,\,Quit if all variables are fixed, otherwise return to the first step.
\end{boxedminipage}
\end{center}

\vspace{3mm}

The LP remains feasible from iteration to iteration since fixing and relaxing both preserve feasibility of $x^*$.
The main part of the proof will be to prove that the algorithm terminates. Assuming that this is so, here is how we complete the proof of \prettyref{theorem:kppplus}.

Let $x^f$ be the vector of final fixed values. We claim that every constraint \eqref{eq:lb} and \eqref{eq:ub} is almost satisfied by $x^f$, with at most an additive $\pm 2$ violation. This is trivially true for each $e$ that was never relaxed. If $e$ is relaxed in some iteration, at that time
$\sum_{P: e \in P} x^*_P \ge \ell_e$ holds, and the left-hand side is a sum of some fixed integers plus at most three non-fixed values strictly between 0 and 1; since $\ell_e$ is an integer, it follows that the fixed integers sum to at least $\ell_e-2.$ Hence these fixed values yield
$\sum_{P: e \in P} x^f_P \ge \ell_e - 2,$
no matter what happens to the non-fixed values over the remaining course of the algorithm. For \eqref{eq:ub} a similar argument establishes that
$\sum_{P: e \in P} x^f_P \le u_e + 2.$

By our choice of $\ell_e = 3$, the inequalities $\sum_{P: e \in P} x^f_P \ge \ell_e - 2$ ensures that $x^f$ is the indicator vector of a set cover $\E_1$. Similarly, $\sum_{P: e \in P} x^f_P \le u_e + 2$ ensures that $\mathbf{1} - x^f$ is the indicator vector of a family $\E_2$ of paths so that
the hypergraph $H' = (E_T, \E_2)$ has $\delta(H') \ge \delta(H) - 5.$ Apply \prettyref{theorem:kppplus} inductively to $H'$; these $1+\lfloor(\delta-6)/5\rfloor$ many set covers, together with $\E_1$, give the desired result.

To prove \prettyref{theorem:kppplus} it remains only to prove that the algorithm terminates. In order to do this it is helpful to write the modified LP explicitly, collecting all fixed constants. Define $\ell'_e$ to be $\ell_e$ minus the $x$ values of all fixed paths passing through $e$, and define $u'_e$ similarly. Define the tree $T'$ by contracting all relaxed edges, so $E_{T'}$ is the set of all non-relaxed edges. Let $\mathcal{P}'$ be the non-fixed paths, updated to take the contraction into account. The modified LP, whose variables are $x'_P$ for each non-fixed $P$, is
\begin{equation}\mathbf{0} \le x' \le \mathbf{1} \textrm{ and } \forall e \in E_{T'}, \ell'_e \le \sum_{P \in \mathcal{P}': e \in P} x'_P \le u'_e.\label{eq:newlp}\end{equation}
Notice that extreme points of this LP correspond to extreme points of the one in the algorithm, so we denote both by $x^*$. If the algorithm does not fix any paths in a given iteration, notice that we have the strict inequality $\mathbf{0} < x^* < \mathbf{1}$.
\begin{claim}\label{claim:term}
If $x^*$ is an extreme point solution to \eqref{eq:newlp} with $\mathbf{0} < x^* < \mathbf{1}$, then some $e \in E_{T'}$ lies on at most two paths from $\mathcal{P}'$.
\end{claim}

\begin{proof}
Since $x^*$ is an extreme point solution and no box constraints are tight, there are disjoint sets $E_\ell, E_u \subseteq E_{T'}$ so that $x^*$ is the unique solution to
$$\sum_{P: e \in P} x^*_P = \begin{cases} \ell'_e, & \forall e \in E_\ell;\\ u'_e, & \forall e \in E_u.\end{cases}$$
In particular, $|E_\ell|+|E_u| = |\mathcal{P}'|$ and the incidence matrix of $E_\ell \cup E_u$ versus $\mathcal{P}'$ is invertible. From this point on, the argument is the same as that at the top of page 9 in the proof of \cite[Lemma 4]{KPP08}. (The items $T=(V,E), E^*, D, c$ there correspond to $T', E_\ell \cup E_u, \mathcal{P}', \ell'/u'$ here. We make use of the integrality of $\ell'$ and $u',$ and linear independence in the aforementioned incidence matrix. )
\qquad\end{proof}

\prettyref{claim:term} proves that the algorithm either fixes or relaxes in each iteration, and therefore terminates.
So the proof of \prettyref{theorem:kppplus} is complete.
\qquad\end{proof}

This gives a 5-approximation algorithm for $\cd$ in this family of tree hypergraphs. From Holyer's result~\cite{Holyer81} mentioned earlier, it is \NP-hard to approximate $\cd$ within a factor less than 3/2 in the same family. We think \prettyref{theorem:2} is not tight; the best upper bound on $\CCDual$ we know is $\lfloor(3\delta+1)/4\rfloor$.

For \prettyref{theorem:kppplus} the ground set is the edge set of an undirected tree, but it is equally natural to consider two other settings. One is to let the ground set be the vertex set of an undirected tree, with hyperedges corresponding to the vertices in paths. Another is to let the ground set be the arc set of a bidirected tree, with hyperedges corresponding to the arcs in directed paths. In the forthcoming full version of \cite{KPP08} it is shown that \cite[Lemma 4]{KPP08} has analogues in these settings, with $\pm 6$ in place of $\pm 2$, and from this we get that $\cd \ge 1+\lfloor(\delta-1)/13\rfloor$ in these settings.

For polychromatic numbers and systems of paths in trees, we have:


\comment{As for the dual parameters, it is not hard to see that
$\PP(\textsc{TreeEdges}|\textsc{Paths}, 2) = +\infty$ by taking the tree to be a star with many leaves and the collection of all length-2 paths. (This is essentially the clique construction from~\cite{Pach80}). Finally, for completeness we state:}

\setcounter{oldthm}{\value{theorem}}
\setcounter{theorem}{\value{thm5}}

\begin{theorem}
$\CC(\textsc{TreeEdges}|\textsc{Paths}, r) = \lceil r/2 \rceil.$
\end{theorem}

\setcounter{theorem}{\value{oldthm}}

\begin{proof}
For the lower bound, colour the edges of the tree by giving all edges at level $i$ the colour $i \bmod \lceil r/2 \rceil$. Since every length-$r$ path in a tree has a monotonic subpath of length $\lceil r/2 \rceil$, we are done.

\comment{The lower bound means that for every collection of paths of length at least $2k-1$, we can $k$-colour the edges so that each path is polychromatic. To see this, root the tree arbitrarily and give all edges at level $i$ the colour $i \bmod k$. Every path uses at least $k$ consecutive levels, so we are done.}

\comment{
For the upper bound, it is enough to exhibit a tree and a family of $(2k-2)$-edge paths such that no $k$-colouring of the tree's edges is polychromatic with respect to the paths. We consider a complete $t$-ary tree of height $k-1$, for $t = 2k^{k-1}$, with root $r$, and where every path of length $2k-2$ (i.e.~each leaf-leaf path passing through $r$) is a hyperedge. Fix any colouring of the tree's edges. By the pigeonhole principle, there is a set $C_r$ of at least $t/k$ children of $r$, such that all edges $\{rc \mid c \in C_r\}$ get the same colour. Likewise, each node in $C_r$ exhibits the same phenomenon for some colour, so there is a subset $C'_r \subset C_r$ and node sets $\{C_c\}_{c \in C'_r}$ such that $C_c$ is a subset of $c$'s children of size at least $t/k$, such that all edges between $C'_r$ and $\bigcup_c C_c$ have a common colour, and such that $|C'_r| \ge t/k^2$. Continuing in this way, since $t/k^{k-1} \ge 2$, we find leaves $u, v$ such that the paths $ru$ and $rv$ contain the same ordered sequence of colours. Since the path has at most $k-1$ colours, it is not polychromatic, as needed.
}

For the upper bound, by monotonicity of $\CC$, it is enough to consider even $r = 2s$. Construct a spider tree that consists of a root vertex and $s+2$ paths of $s$ edges (each path has the root as one endpoint, and all other vertices are distinct). Define each of the $\tbinom{s+2}{2}$ leaf-leaf paths to be a hyperedge of the hypergraph; note that each one has size $r$. We must show that there is no polychromatic $(s+1)$-colouring. To see this, notice that each of the root-leaf paths must miss at least one colour, and by the pigeonhole principle two of them miss a common colour. The union of those two root-leaf paths is a hyperedge which is not coloured polychromatically, and we are done.
\qquad\end{proof}

\comment{\begin{theorem}
$\CC(\textsc{TreeEdges}|\textsc{Paths}, \delta) \ge \lceil \delta/2 \rceil.$
\end{theorem}

\begin{proof}
It is enough to show that, given a tree and a family of subpaths of length at least $2k+1$, we can $(k+1)$-colour the edges so that every path is polychromatic. Root the tree arbitrarily and give all edges in level $i$ colour $i \bmod (k+1)$. Since a path of length $2k+1$ hits at least $k+1$ consecutive levels, we are done.
\qquad\end{proof}

\begin{theorem}
$\PP(\Tr^t, \Delta) = +\infty$ for all $\Delta \ge 2$.
\end{theorem}

\begin{proof}
Take a tree which is a star with $n$ tips and let the hyperedges be all possible paths of length 2 in the tree; consider the transpose of its incidence hypergraph. It has $\tbinom{n}{2}$ vertices and $n$ edges each of size $n-1$, with all vertices of degree $2 \le \Delta$. (The resulting hypergraph is the same as $\tbinom{[n]}{2}^t$.)

To compute $pd$ we need to partition the edge set into as few matchings as possible. But every pair of edges intersect, so $pd \ge n$. Since $n$ was arbitrary, we are done.
\qquad\end{proof}}

\comment{
\subsection{Proof of \prettyref{theorem:kppplus}}

\begin{proof}
We are given a tree $(V, E)$. If a collection of paths is such that every $e \in E$ lies in at least $c$ paths, call that collection a \emph{$c$-cover}. Given a multiset $\P$ of paths in the tree which is a $(7k+1)$-cover, we need to show the paths can $k$-coloured so that each colour class is a 1-cover. We do this by induction on $k$, where the base case $k=0$ is trivial. Following the previous conventions, let $d(e)$, the degree of edge $e \in E$, denote the number of paths which contain $e$ (so $d(e)\ge 7k+1$ for all $e$).

Consider the following polyhedron, with a variable $x_P$ for each path $P$; parameters $a$ and $b$ are set to $a=1$ and $b=7k-6$.
\begin{align}
\forall P \in \P: \quad 0 &\le x_P \le 1 \label{eq:01}\\
\forall e \in E: \quad \sum_{P:e \in P} x_P &\ge 2+\left\lceil \frac{a}{a+b+6}\cdot d(e) \right\rceil \label{eq:a}\\
\forall e \in E: \quad \sum_{P:e \in P} (1-x_P) &\ge 2+\left\lceil \frac{b}{a+b+6}\cdot d(e) \right\rceil. \label{eq:b}
\end{align}
We will use this LP to find an \emph{integral} $x$ which is the characteristic vector of a 1-cover and such that $1-x$ is the characteristic vector of a $(7k-6)$-cover. Then we will be done by induction.

For starters, observe that the polyhedron is nonempty: the point $x$ with all components set to $\frac{a+3}{a+b+6}$ is feasible, since $\P$ is an $(a+b+6)$-cover. In our iterated LP relaxation algorithm, in each iteration we (i) find an extreme point solution $x^*$, (ii) for each integral $x^*_P$ fix its value forever, and then (iii) do a \emph{relaxation} step in which we discard a constraint. The algorithm terminates since only a finite number of variables can be fixed and only a finite number of constraints can be discarded.

The key now is to establish what sort of relaxation step is guaranteed to be possible. The setup in any given iteration is that some of the $x_P$ variables have been fixed to 0 and 1, and some of the constraints \eqref{eq:a} and \eqref{eq:b} have been discarded in previous iterations. We want to show the following:
\begin{claim}
In each iteration, some constraint of the form \eqref{eq:a} or \eqref{eq:b} (that has not already been discarded) involves at most 3 nonfixed variables.
\end{claim}

\begin{proof}
Let $x^*$ be the most recent extreme point solution. As is typical in iterated LP-based algorithms, take a maximal linearly independent family of the tight constraints for $x^*$. Observe that for each $e$, the two corresponding constraints \eqref{eq:a} and \eqref{eq:b} are linearly dependent, hence at most one is in the family. So the nonfixed part of $x^*$ is a vector strictly between 0 and 1, such that it is the unique solution to a family of integral exact-capacity constraints on edges. Lemma 4 from \cite{KPP08} shows that, under this hypothesis, some exact-capacity constraint has at most three nonintegral variables, as needed.
\qquad\end{proof}

Now we discard the constraint whose existence is guaranteed by the Claim. Say it is \eqref{eq:a} for some specific $e$; the other case is the same. The sum $\sum_{P:e \in P} x^*_P$ consists of some variables fixed at 0 or 1, plus at most 3 nonfixed fractional variables. We also know $\sum_{P:e \in P} x^*_e = 2+\lceil \frac{a}{a+b+6} \cdot d(e) \rceil$ since the constraint is tight. The fractional (nonfixed) variables in the sum add up to an integer which is less than 3. Hence the variables fixed to 1 ensure that throughout the rest of the algorithm,
\begin{equation}\label{eq:w}\sum_{P:e \in P} x^*_e \ge \left\lceil \frac{a}{a+b+6} \cdot d(e) \right\rceil
\end{equation}
will hold.

Hence for each constraint \eqref{eq:a}, either it is satisfied at termination, or the weaker \eqref{eq:w} holds. But in either case the final $x$ is a 1-cover. Similarly, $(1-x)$ is a $(k-6)$-cover at termination. This completes the proof.
\qquad\end{proof}
}

\section{Small VC-Dimension}\label{sec:vc}
Recall the definition of VC-dimension: $S \subseteq V$ is \emph{shattered} by a hypergraph if every $T \subseteq S$ can be obtained as an intersection of $S$ with some hyperedge, i.e.~if $\{S \cap E \mid E \in \E\} = {\bf2}^S$; and the VC-dimension of a hypergraph equals the size of the largest shattered set. The \emph{dual VC-dimension} is the VC-dimension of the dual.

To show that primal and dual VC-dimension 2 is not enough to ensure cover-decomposability, it suffices to use the following construction of Pach, Tardos and T\'oth~\cite{PTT09}. Let $T$ be a $k$-ary rooted tree, with $k$ levels of vertices; so $T$ has $k^{k-1}$ leaves and $\sum_{i=0}^{k-1} k^i$ nodes in total. For each non-leaf node $v$ we define its \emph{sibling hyperedge} to be the $k$-set consisting of $v$'s children. For each leaf node $v$ we define its \emph{ancestor hyperedge} to be the $k$-set consisting of the nodes on the path from $v$ to the root node. We let $\PTT_k$ denote the hypergraph consisting of all ancestor and sibling hyperedges. Pach et al.~use a Ramsey-like argument to show $\Chi(\PTT_k) = 1$. Using this, we prove \prettyref{theorem:5}.

\setcounter{oldthm}{\value{theorem}}
\setcounter{theorem}{\value{thm7}}

\begin{theorem}
For the family of hypergraphs $\{H \mid \textrm{VC-dim}(H), \textrm{VC-dim}(H^*) \le 2\}$, we have $\CC(r) = 1$ for all $r$ and $\CCDual(\delta) = 1$ for all $\delta$.
\end{theorem}

\setcounter{theorem}{\value{oldthm}}

\begin{proof}
We will show that $\PTT_k$ has VC-dimension and dual VC-dimension at most 2. Thus $\PTT_k$ confirms the first part of \prettyref{theorem:5} (since $k=r$) and $\PTT_k^*$ confirms the second part. A key observation is that in $\PTT_k$,
\begin{equation}\textrm{any two distinct edges intersect in either 0 or 1 vertices.}\label{eq:PTT}\end{equation}

First, we bound the primal VC-dimension. Suppose for the sake of contradiction that there is a shattered vertex set $\{x, y, z\}$ of size 3. Since the set is shattered, there is a hyperedge $E$ containing all of $\{x, y, z\}$. But by the definition of shattering, there must be another hyperedge $E' \neq E$ with $E' \cap \{x, y, z\} = \{x, y\}$. This contradicts \eqref{eq:PTT}, so we are done.

Second, we bound the dual VC-dimension. Suppose for the sake of contradiction that $E, E', E''$ are three hyperedges which are shattered in the dual. This implies that $E \cap E' \cap E''$ and $(E \cap E') \bs E''$ are both nonempty. But this would imply $|E \cap E'| \ge 2$, contradicting \eqref{eq:PTT}.
\end{proof}

In the remainder of this section we show that set systems with unit VC-dimension have large cover-decomposition and polychromatic numbers (\prettyref{theorem:4}). The following is a convenient way of looking at such hypergraphs.

\begin{definition}
A hypergraph $(V, \E)$ is called \emph{cross-free} if the following holds for every pair $S, T \in \E$: at least one of $S \cap T, S \bs T, T \bs S,$ or $V \bs S \bs T$ is empty.
\end{definition}

Observe that a hypergraph has VC-dimension 1 if and only if the dual hypergraph is cross-free. In the proofs of this section we will also use \emph{laminar} hypergraphs, which are a subclass of cross-free hypergraphs.

\begin{definition}
A hypergraph $(V, \E)$ is called \emph{laminar} if the following holds for every pair $S, T \in \E$: at least one of $S \cap T, S \bs T$, or $T \bs S$ is empty. (Equivalently, either $S \cap T = \varnothing, S \subseteq T$, or $T \subseteq S$.)
\end{definition}

We need the following fact; a one-line proof is that such hypergraphs are balanced, but an elementary proof is also an easy exercise.
\begin{fact}\label{fact:laminarcoverdecomp}
In a laminar hypergraph, $\ChiDual = \delta$. \hfill 
\end{fact}

By duality, the following proposition proves the first half of \prettyref{theorem:4}.
\begin{proposition}\label{proposition:cf1}
For the family of cross-free hypergraphs, $\CCDual(\delta) = \lceil \delta/2 \rceil$.
\end{proposition}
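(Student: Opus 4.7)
My plan has two parts. For the upper bound I exhibit the hypergraph $[n]^\ast:=([n],\{V\setminus\{i\}:i\in[n]\})$ with $n=\delta+1$. This is $\delta$-regular, and since $(V\setminus\{i\})\cup(V\setminus\{j\})=V$ for any two edges, the Venn region $V\setminus S\setminus T$ is empty, so the family is cross-free. Each hyperedge omits exactly one vertex, so any set cover uses at least two hyperedges; therefore $\Chi'([n]^\ast)\le\lfloor n/2\rfloor=\lceil\delta/2\rceil$.

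For the lower bound I will induct on $\delta$. The base cases $\delta\le 1$ are immediate (the whole edge set is a single cover when $\delta=1$). For $\delta\ge 2$ the core step is to extract a cover $C\subseteq\E$ in which every vertex lies in at most two hyperedges of $C$. Given such a $C$, the residual $\E\setminus C$ is still cross-free and has min-degree at least $\delta-2$, so the inductive hypothesis yields $\Chi'(\E\setminus C)\ge\lceil(\delta-2)/2\rceil$, and hence $\Chi'(H)\ge 1+\lceil(\delta-2)/2\rceil=\lceil\delta/2\rceil$.

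To produce $C$ I will use the classical tree representation of cross-free families: a tree $T$ and a map $\phi:V\to V(T)$ such that every hyperedge $E\in\E$ equals the $\phi$-preimage of one component of $T\setminus e_E$ for some tree edge $e_E$. If some tree edge $e$ carries hyperedges on both of its sides, picking one per side covers $V$ with each vertex in exactly one of the two edges. Otherwise every tree edge is one-sided and I orient it toward the side carrying its hyperedges; I will then show by induction on $|E(T)|$ that one can select a subset $S\subseteq E(T)$ such that for every $u\in\phi(V)$ the count $|\{e\in S:u\text{ lies in the head-side of }e\}|$ is $1$ or $2$, and take one hyperedge at each $e\in S$ to form $C$. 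The critical step is peeling a leaf $\ell$ with leaf edge $e=\ell u$: the only arrow that can point into the singleton component $\{\ell\}$ is $e$ itself (every other tree edge lives in the $u$-subtree and cannot reach $\ell$), so if $\ell\in\phi(V)$ the hypothesis of min-degree at least $1$ forces $e$ to be oriented $u\to\ell$ and to belong to $S$. Because the head $\ell$ then lies outside every other component, $e$ contributes nothing to the counts at other $u'\in\phi(V)$, so the counts at those $u'$ are preserved after passing to $T\setminus\{\ell,e\}$ and the induction hypothesis applies cleanly; adjoining $e$ to the smaller solution $S''$ produces the required $S$.

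The main difficulty will be the peeling argument itself, especially dispatching the degenerate leaves with $\phi^{-1}(\ell)=\varnothing$: such leaves correspond to tree edges whose only hyperedge is either empty (simply prune the tree edge) or equal to $V$ (use this singleton as the cover directly, giving exactly one edge per vertex).
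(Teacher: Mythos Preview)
Your upper-bound example and the inductive scaffold for the lower bound (peel off a cover in which every vertex has degree at most $2$, recurse on the residual) are correct and match the paper exactly.

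The gap is in the leaf-peeling argument for your ``every tree edge is one-sided'' case. You conflate ``the head-side of $e'$ equals $\{\ell\}$'' with ``$\ell$ lies in the head-side of $e'$.'' It is true that only the leaf edge $e$ can have head-side \emph{equal to} $\{\ell\}$, but any other tree edge $e'$ whose head-side component (in $T\setminus e'$) contains $u$ also contains $\ell$. This breaks both of your key steps. First, ``$\ell\in\phi(V)$ and min-degree $\ge 1$ force $e$ to be oriented $u\to\ell$'' is false: take $T$ a star with centre $c$ and leaves $\ell_1,\ell_2,\ell_3$, every $\phi^{-1}(\cdot)$ nonempty, and as hyperedges only the three sets $V\setminus\phi^{-1}(\ell_i)$. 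Each $e_i=c\ell_i$ is one-sided and oriented $\ell_i\to c$, the hypergraph is $2$-regular, yet no leaf edge points toward its leaf and none of your degenerate-leaf escape hatches apply. Second, even when $e$ does point $u\to\ell$ and you add it to $S$, the count at $\ell$ coming from edges of $S''$ equals the count at $u$ in $T'$; if $u\in\phi(V)$ this is $1$ or $2$, so the total count at $\ell$ becomes $2$ or $3$, not necessarily $\le 2$.

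The paper bypasses the tree representation with a one-line dichotomy. Either some pair $S,T\in\E$ has $S\cup T=V$, and then $\{S,T\}$ is the desired degree-$\le 2$ cover; or else $V\setminus S\setminus T\ne\varnothing$ for every pair, whereupon cross-freeness forces one of $S\cap T,\ S\setminus T,\ T\setminus S$ to be empty for every pair, i.e.\ the family is \emph{laminar}, and there $\Chi'=\delta$ outright (the inclusion-maximal sets are disjoint and cover $V$; strip them off and induct). Note that the paper's first case, ``$S\cup T=V$ for some pair,'' is strictly broader than your ``some tree edge carries both sides,'' which is exactly why its residual case collapses to laminar while yours does not; the star example above lands in the paper's first case immediately.
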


\begin{proof}
First, for each $\delta$ we demonstrate a cross-free hypergraph with minimum degree $\delta$ which cannot be decomposed into more than $\lceil \delta/2 \rceil$ covers. Take a hypergraph whose ground set $V$ has $\delta+1$ elements, with one hyperedge $V \bs \{v\}$ for each $v \in V$. Since each set cover has at least 2 hyperedges, we have $\CCDual \le \lfloor (\delta+1)/2 \rfloor$, as needed.

To prove the other direction $\CCDual \ge \lfloor (\delta+1)/2 \rfloor$ for all cross-free hypergraphs, we use induction on $\delta$. Clearly the bound holds for $\delta=0$ or $\delta=1$. For the inductive step, we have two cases. First, if the cross-free hypergraph $(V, \E)$ has two sets $S, T \in \E$ for which $S \cup T = V$, then this is a cover with maximum degree 2 at each node. Thus we are done, since by induction $\CCDual(V, \E \bs \{S, T\}) \ge \lceil (\delta-2)/2 \rceil$. Second, if the cross-free hypergraph has no such $S, T$, then in fact the cross-free hypergraph is laminar, and we are done by \prettyref{fact:laminarcoverdecomp}.
\qquad\end{proof}

Again by duality, the following proposition proves the second half of \prettyref{theorem:4}.
\begin{proposition}\label{proposition:cf2}
For the family of cross-free hypergraphs, $\CC(r) = \lceil r/2 \rceil$.
\end{proposition}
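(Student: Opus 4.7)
The plan is to prove the two inequalities separately.

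For the upper bound, I would reuse the extremal example from the proof of \prettyref{proposition:cf1}: the hypergraph on $V = [r+1]$ with edges $\{V \bs \{v\} : v \in V\}$ is cross-free (for any $u \ne v$ the ``fourth region'' is $V \bs (V\bs\{u\}) \bs (V\bs\{v\}) = \{u\} \cap \{v\} = \varnothing$), its minimum edge size is $r$, and every colour class must meet every hyperedge $V \bs \{v\}$, so singletons are forbidden and at most $\lfloor(r+1)/2\rfloor = \lceil r/2 \rceil$ classes fit.

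For the lower bound I plan to invoke the tree representation of cross-free families (Edmonds--Giles; see e.g.\ Schrijver): any cross-free $\mathcal{F}$ on $V$ admits a tree $T$ and a map $\phi:V \to V(T)$ so that each $F \in \mathcal{F}$ has the form $\phi^{-1}(W)$, where $W$ is one of the two components obtained by removing a single edge of $T$. I would root $T$ at an arbitrary vertex, compute a DFS pre-order of $V(T)$, and derive a linear order $v_1,\ldots,v_n$ of $V$ by listing $\phi^{-1}(t)$ (in any order) each time the DFS visits $t$. Finally, assign $v_j$ the colour $(j-1) \bmod k$ where $k = \lceil r/2\rceil$.

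To verify polychromaticity of a hyperedge $F = \phi^{-1}(W)$, I would split on which side of its defining cut $W$ is. If $W$ is the subtree not containing the root, then $F$ is a contiguous block of the $V$-order of length $|F| \ge r \ge k$ and contains every residue modulo $k$. Otherwise $W$ is the complement of such a subtree, and $F$ has the form $\{v_1,\ldots,v_a\} \cup \{v_{n-b+1},\ldots,v_n\}$ with $a+b = |F| \ge r \ge 2k-1$; if both $a$ and $b$ were strictly less than $k$, then $a + b \le 2k - 2 < r$, a contradiction, so at least one of the prefix and suffix alone has length $\ge k$ and already exhibits every colour.

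The main obstacle is simply locating and applying the tree representation correctly: cross-freeness is symmetric under complementation, so a hyperedge can correspond to either orientation of its defining cut, and the colouring must behave well for both. The DFS-modulo-$k$ colouring is designed precisely to be robust against this ambiguity, which is what keeps the argument short once the representation is in hand.
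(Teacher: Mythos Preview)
Your argument is correct, and it differs genuinely from the paper's proof of the lower bound. The paper does not invoke the Edmonds--Giles tree representation; instead it first passes to the clutter of inclusion-minimal hyperedges and then proves a dichotomy lemma (\prettyref{lemma:crossfreeclutter}): in a cross-free clutter either all hyperedges are pairwise disjoint, or every pair of hyperedges has union $V$. Each case is then handled directly --- the first by colouring each edge independently, the second by an ad hoc greedy pairing that uses $2$ vertices per colour for the first $k-1$ colours.

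Your route trades that structural dichotomy for the tree representation, which is a heavier external result but yields a single uniform colouring rule (DFS order modulo $k$) with no case split on the clutter structure. The verification you give for the ``complement of a subtree'' case (prefix $+$ suffix with $a+b\ge 2k-1$ forces $\max(a,b)\ge k$) is exactly right and is where the factor of $2$ enters. The paper's approach is more self-contained; yours is more constructive and arguably cleaner once the representation is granted --- and indeed the paper itself remarks, just after \prettyref{proposition:cf2}, that the Edmonds--Giles representation lets one view \prettyref{theorem:4} as a statement about tree-derived hypergraphs, which is precisely what you exploit.
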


\begin{proof}
Consider again the hypergraph used in the first half of the proof of \prettyref{proposition:cf1}. Note that it is self-dual, e.g.~since its incidence matrix is the all-ones matrix minus the identity matrix, which is symmetric. Thus, we deduce $\CC(r) \le \lfloor (r+1)/2 \rfloor$ for all $r$.

Next, we prove the other direction. We must show for all $k$, that if every hyperedge has size at least $2k-1$, then there is a polychromatic $k$-colouring. A key observation is the following: if $\mathcal{E}^\mathrm{min}$ denotes the inclusion-minimal elements of $\mathcal{E}$, then a colouring is polychromatic for $(V, \mathcal{E})$ if and only if it is polychromatic for $(V, \mathcal{E}^\mathrm{\min})$. So we may reset $\mathcal{E} := \mathcal{E}^\mathrm{\min}$, which does not affect $r$ or cross-freeness. Moreover now $\mathcal{E}$ is a \emph{clutter}: there do not exist two different hyperedges $A, B \in \mathcal{E}$ for which $A \subset B$.

\begin{lemma}\label{lemma:crossfreeclutter}
In a cross-free clutter $(V, \E)$, either all hyperedges are pairwise disjoint, or for every two hyperedges $A, B$ we have $A \cup B = V$.
\end{lemma}

\begin{proof}
From the definitions of ``cross-free" and ``clutter," we first have that for every $S, T \in \E$, either $S \cap T = \varnothing$ or $S \cup T = V$. Note that it is impossible for three distinct sets $S, T, U \in \E$ to satisfy $S \cup T = V, S \cap U = \varnothing$ since this would imply $U \subset T$. Using this transitively, we obtain the lemma.
\qquad\end{proof}

Using the lemma, \prettyref{proposition:cf2} now boils down to two cases. The first case is that all hyperedges are pairwise disjoint. In this case a polychromatic $k$-colouring is easy to obtain: for each hyperedge $E$, just colour its $|E| \ge 2k-1 \ge k$ elements in any way that uses all $k$ colours.

Finally, we deal with the case that for every two hyperedges $A, B$ we have $A \cup B = V$. Rewriting, this means that $\E$ is of the form $\{V \bs S_1, V \bs S_2, \dotsc, V \bs S_m\}$ where $m \ge 2$, the $S_i$ are pairwise disjoint, and $|V \bs S_i| \ge 2k-1$ for all $i$. Here is a polychromatic colouring method which picks one colour class at a time. Repeat the following for $j=1, \dotsc, k-1$: find any two vertices $v, v'$ such that no $S_i$ contains them both, and colour them with colour $j$. To see this is possible, note by induction that after iteration $j$, the number of uncoloured vertices in each $V \bs S_i$ is at least $2k-1-2j$. After stage $k-1$, colour all remaining uncoloured vertices with colour $k$.
\qquad\end{proof}

We remark that \prettyref{theorem:4} may be viewed from the perspective of a family of tree-related hypergraphs, distinct from the ones we previously studied. Namely, Edmonds \& Giles~\cite{EG77} showed that
a hypergraph $H$ is cross-free precisely when there exists a directed tree, such that each element of $V(H)$ occurs exactly once as a label on a node of the tree (a given node can have zero or multiple labels), and $\E(H)$ consists of those label sets on the half-tree ``pointed to" by each directed edge of the tree.

\section{$O(R)$-approximation for Sensor Cover}
The \emph{sensor cover} problem is a generalization of cover-decomposition. The input is a hypergraph plus, for each hyperedge $E$, a \emph{duration} $\ell_E$; a \emph{schedule} is an activation time $a_E$ for each hyperedge, and it has \emph{coverage} $T$ if for all $0 \le t \le T$ and all $v \in V$, some edge $E$ has $v \in E$ and $t \in [a_E, a_E + \ell_E]$. We are interested in finding a schedule with maximum coverage. Note that in the special case where all durations are unit, the maximum coverage equals the cover-decomposition number.

Let $\overline{\delta} = \min_v \sum_{E \ni v} \ell_E$, the minimum duration-weighted degree.
Clearly the maximum schedule coverage is at most $\overline{\delta}$. The most prominent results for this setting are from \cite{GK09} (the conference version): it is shown that every interval hypergraph has a schedule of coverage $\Omega(\overline{\delta})$; using this as a subroutine it is shown that for hypergraphs corresponding to translates of any fixed open convex polygon covering points in $\reals^2$, a schedule of $\Omega(\overline{\delta})$ always exists.

Let $R$ still mean the maximum number of vertices in any hyperedge, independent of the durations. We then obtain the following result.
\begin{theorem}\label{theorem:sensor-cover}
Every sensor cover instance admits a schedule of coverage $\Omega(\overline{\delta}/R)$.\end{theorem}
\begin{proof}
Without loss of generality, scale all durations uniformly so that $\overline{\delta}=1$. Let $\alpha$ be a parameter to be fixed later in the range $O(R)$, so that we will seek a schedule of coverage $1/\alpha$.

\comment{Clearly if there are any hyperedges of duration at least $1/\alpha$ we can schedule them at time zero, satisfying all their contained vertices; so without loss of generality all durations are $<1/\alpha$. Then, round down durations of the remaining hyperedges to the closest member of $\{\lambda/\alpha, \lambda^2/\alpha, \dotsc\}$ where $0<\lambda<1$ is another parameter to be fixed later. Let level $i$ consist of those hyperedges whose new durations are $\lambda^i/\alpha$, and let $d_i(v)$ be the degree of vertex $v$ in level $i$. Note that after rounding, the duration-weighted degree of each vertex is at least $\lambda$, so
\begin{equation}
\sum_{i \ge 1} \lambda^i d_i(v)/\alpha \ge \lambda.\label{eq:boundy}
\end{equation}
In each level we will use:}

Clearly if there are any hyperedges of duration at least $1/\alpha$ we can schedule them at time zero, satisfying all their contained vertices; so without loss of generality all durations are $<1/\alpha$. Then, round down durations of the remaining hyperedges to the closest member of $\{2^{-1}/\alpha, 2^{-2}/\alpha, \dotsc\}$. Let level $i$ consist of those hyperedges whose new durations are $2^{-i}/\alpha$, and let $d_i(v)$ be the degree of vertex $v$ in level $i$. Note that after rounding, the duration-weighted degree of each vertex is at least $1/2$, so
\begin{equation}
\sum_{i \ge 1}  \frac{d_i(v)}{2^i \alpha} \ge 1/2.\label{eq:boundy}
\end{equation}
In each level we will use:

\begin{claim}\label{claim:orient}
In a hypergraph, we can assign each hyperedge to a vertex it contains, so that for each $v$, the number of hyperedges assigned to it is at least $\lfloor d(v)/R \rfloor$.
\end{claim}
\begin{proof}
The naive LP formulation for this assignment problem (with integral data) is well-known to be integral. There is a feasible fractional assignment (fractionally assign each $E$ an amount $1/|E|$ to each vertex it contains), so there is also a feasible integral assignment.
\qquad\end{proof}

\comment{In each level, find the assignment specified by \prettyref{claim:orient}. We claim that, adding up all levels, the total duration assigned to each vertex is at least $1/\alpha$, provided we choose $\lambda$ correctly. If we can ensure this then we are done, since each hyperedge is assigned to just one vertex, and each vertex can dictate the scheduling of all hyperedges assigned to it. Since $\lfloor x/R \rfloor \ge (x-(R-1))/R$ for integral $x$ and $R$, the duration of the edges assigned to each $v$ is at least
\begin{align*}\sum_{i \ge 1} \lambda^i/\alpha \cdot (d_i(v)-(R-1))/R &= \frac{1}{R}\sum_{i \ge 1} \lambda^i d_i(v)/\alpha - \frac{R-1}{R}\sum_{i \ge 1} \lambda^i/\alpha \\&\ge \lambda/R - \frac{R-1}{R}\frac{\lambda}{(1-\lambda)\alpha},\end{align*}
where in the last inequality we used \eqref{eq:boundy} and the geometric series formula.
Provided the right-hand side above is at least $1/\alpha$, we are done. Solving gives $\alpha \ge (R-\lambda)/\lambda(1-\lambda).$ Thus we take the minimum $\alpha = 2R-1 + 2\sqrt{R^2-R}$, obtained at $\lambda = R-\sqrt{R^2-R}$. (Or as a rough approximation, $\lambda = 1/2, \alpha = 4R-2$.)}

In each level, find the assignment specified by \prettyref{claim:orient}. We will be done if we can pick $\alpha = O(R)$ so that, adding up all levels, the total duration assigned to each vertex is at least $1/\alpha$: a satisfactory schedule can then be obtained by, for each vertex, scheduling its assigned edges one after the other. Since $\lfloor x/R \rfloor \ge (x-(R-1))/R$ for integral $x$ and $R$, the duration of the edges assigned to each $v$ is at least
\begin{align*}\sum_{i \ge 1} \frac{\lfloor d_i(v)/R \rfloor}{2^{i} \alpha } \ge \sum_{i \ge 1} \frac{(d_i(v)-(R-1))/R}{2^{i} \alpha } &= \frac{1}{R}\sum_{i \ge 1}  \frac{d_i(v)}{2^i \alpha} - \frac{R-1}{R}\sum_{i \ge 1} \frac{1}{2^i \alpha} \\&\ge \frac{1}{R}\frac{1}{2} - \frac{R-1}{R}\frac{1}{\alpha},\end{align*}
where in the last inequality we used \eqref{eq:boundy} and the geometric series sum formula. We need precisely that the right-hand side above is at least $1/\alpha$. A short calculation reveals that $\alpha = 4R-2$ will do, and completes the proof.
\qquad\end{proof}

We remark that, by tweaking the scaling factor of $2$ in the proof above, the final ratio $\alpha$ can be slightly improved from $4R-2$ to $2R-1 + 2\sqrt{R^2-R}.$

\section{Open Problems}
The major open problem in our study is whether the sensor cover result \prettyref{theorem:sensor-cover} can be improved to a schedule of coverage $\Omega(\overline{\delta}/\ln R)$, in line with the tight results from \prettyref{sec:bdsize}. We also do not know if in $\textsc{TreeEdges}|\textsc{Paths}$, a schedule of coverage $\Omega(\overline{\delta})$ is always possible. For the non-scheduling version, it would be interesting to know more about $\CCDual(\Hgr(R), \delta)$ in the regime $\delta = \Theta(\ln R)$, or for small values of $\delta$ and $R$.

Two outstanding open geometric problems that predate our work are to determine whether $\CC(\delta) = \Omega(\delta)$ for the family of unit discs, and whether the family of all axis-aligned squares is cover-decomposable.

P{\'a}lv\"olgyi~\cite{Pa10} poses two nice combinatorial questions: first, is there a function $f$ so that in hypergraph families closed under edge deletion and duplication, $\CCDual(\delta_0) \ge 2$ implies $\CCDual(f(\delta_0)) \ge 3$? This compelling question, which would give a unified explanation for much of the existing literature, is open even for $\delta_0 = 2$; no counterexamples are known to the hypothesis $f(\delta_0) = O(\delta_0)$, i.e.~that cover-decomposability implies $\Chi$ grows linearly with $\delta$. Second, consider an $m \times r$ matrix of integers which is weakly increasing from left to right and top to bottom. If we think of each row as a set of size $r$, giving a hyperedge on ground set $\mathbb{Z}$, do these \emph{shift chains} have Property B for large enough $r$? It is known~\cite{Pa10} that $r = 3$ is not enough.

\subsubsection*{Acknowledgments}
We wish to thank Jessica McDonald, D\"om\"ot\"or P{\'a}lv\"olgyi, and Oliver Schaud for helpful discussions on these topics. We thank the ESA and SIDMA referees for their suggestions, in particular for the nicer proof of the second half of \prettyref{theorem:3}.

\bibliography{hc}

\bibliographystyle{siam}

\end{document}